%
%
%
%
%
\RequirePackage{fix-cm}
\documentclass[smallcondensed,envcountsame]{svjour3} 
%

\smartqed  
%
\usepackage{graphicx}
%
%
%

\usepackage{times,a4wide,mathrsfs}
\usepackage{amsmath}
\usepackage{amsfonts}


\newcommand{\C}{\mathbb{C}}
\newcommand{\ZZ}{\mathbb{Z}}

\newcommand{\QQ}{\mathbb{Q}}
\newcommand{\NN}{\mathbb{N}}
\newcommand{\PP}{\mathbb{P}}

\newcommand{\DD}{\mathcal D}
\newcommand{\XX}{\mathcal X}
\newcommand{\YY}{\mathcal Y}

\newcommand{\Z}{\mathcal Z}

\newcommand{\pic}{\hbox{Pic}}

\newcommand{\gr}{\hbox{Gr}}
\newcommand{\wt}{\widetilde}
\newcommand{\ima}{\hbox{Im}}
\newcommand{\rom}{\romannumeral}

\newtheorem{convention}{Conventions}

\newtheorem{nonumberingp}{Proposition}

%
 
 \journalname{}

\begin{document}

\title{Some results on a conjecture of Voisin for surfaces of geometric genus one}

\author{Robert Laterveer}

\institute{CNRS - IRMA, Universit\'e de Strasbourg \at
              7 rue Ren\'e Descartes \\
              67084 Strasbourg cedex\\
              France\\
              \email{laterv@math.unistra.fr}   }

\date{Received: date / Accepted: date}

\maketitle

\begin{abstract} Inspired by the Bloch--Beilinson conjectures, Voisin has formulated a conjecture concerning the Chow group of $0$--cycles on complex varieties of geometric genus one. This note presents some new examples of surfaces for which Voisin's conjecture is verified.

\end{abstract}

\keywords{Algebraic cycles \and Chow groups \and motives \and finite--dimensional motives \and $K3$ surfaces \and surfaces of general type}

\subclass{Primary 14C15, 14C25, 14C30. Secondary 14J28, 14J29, 14J50, 14K99}

\section{Introduction}

The world of algebraic cycles on complex varieties is famous for its open questions (fairly comprehensive tourist guides, nicely exhibiting the boundaries between what is known and what is not known, can be found in \cite{Vo} and \cite{MNP}). The Bloch--Beilinson conjectures predict that this world has beautiful structure, and more precisely that there exists an intimate relation between Chow groups (i.e., algebraic cycles modulo rational equivalence) and singular cohomology.

The present note focuses on one particular instance of this predictive power of the Bloch--Beilinson conjectures: we consider the case of algebraic cycles on self--products $X\times X$, where $X$ is an $n$--dimensional smooth complex projective variety with $h^{n,0}=1$ and $h^{i,0}=0$ for all $0<i<n$. The Chow group of $0$--cycles
  \[ A^{2n}(X\times X) \]
  is conjecturally related to the cohomology groups
  \[  H^{4n}(X\times X), H^{4n-1}(X\times X), \ldots, H^{2n}(X\times X)\ .\]
Let
  \[ \iota\colon\ \ X\times X\ \to\ X\times X \]
  denote the involution exchanging the two factors. Then a consequence of this conjectural relation is that the effect of $\iota$ on $A^{2n}(X\times X)$ should be a reflection of the effect of $\iota$ on 
    \[  H^{4n}(X\times X), H^{4n-1}(X\times X), \ldots, H^{2n}(X\times X)\ .\]
 Now, the condition $h^{n,0}(X)=1$ ensures that the action of $\iota$ on $H^{2n}(X\times X)$ is particularly well--understood: we have that
   \[ (\hbox{id}  +(-1)^{n+1} \iota_\ast ) H^{2n}(X\times X)\ \subset\  H^{2n}(X\times X)\cap F^1\ ,\]
   where $F^\ast$ denotes the Hodge filtration
   (cf. lemma \ref{hodge} below).
   Conjecturally, this implies that
   \[   \hbox{id}= (-1)^n \iota_\ast\colon\ \ \gr^{2n}_F A^{2n}(X\times X)\ \to\  \gr^{2n}_F A^{2n}(X\times X)\ ,\]
   where $\gr^{2n}_F$ denotes the deepest level of the conjectural Bloch--Beilinson filtration on Chow groups. The condition on the Hodge numbers $h^{i,0}$ implies that all the levels $\gr^j_F$ for $j<2n$ are conjecturally $0$. Thus, one arrives at the following explicit conjecture concerning $0$--cycles on $X\times X$, which was first formulated by Voisin:

  \begin{conjecture}[Voisin \cite{V9}]\label{inv2} Let $X$ be a smooth projective complex variety of dimension $n$, with $h^{n,0}(X)=1$ and $h^{j,0}(X)=0$ for $0<j<n$. Let $z, z^\prime\in A^nX$ be $0$--cycles of degree $0$. Then
    \[   z\times z^\prime = (-1)^n \ z^\prime\times z\ \ \hbox{in} \ A^{2n}(X\times X)\ .\]
    (The notation $z\times z^\prime$ is a short--hand for the cycle class $(p_1)^\ast (z)\cdot (p_2)^\ast(z^\prime)\in A^{2n}(X\times X)$, where $p_1, p_2$ denote projection on the first, resp. second factor.)
    \end{conjecture}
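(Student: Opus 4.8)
Since Conjecture \ref{inv2} is itself a prediction of the Bloch--Beilinson conjectures, the only realistic plan is to prove it under the extra hypothesis that the Chow motive $h(X)$ is finite--dimensional in the sense of Kimura--O'Sullivan, taking lemma \ref{hodge} as the Hodge--theoretic input. The first step is to translate the statement into motives. Writing $t(X)$ for the transcendental part of the motive of $X$ (the summand of the middle Chow--Künneth piece complementary to the algebraic classes; for a surface this $t_2(X)$ exists unconditionally, after Kahn--Murre--Pedrini), the hypotheses $h^{n,0}(X)=1$ and $h^{j,0}(X)=0$ for $0<j<n$ guarantee that the homologically trivial $0$--cycles are governed by $t(X)$: one has $A^n_0(X)=A^n_{hom}(X)=A\bigl(t(X)\bigr)$, the projector onto $t(X)$ acting as the identity on these cycles. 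As Chow--Künneth projectors are multiplicative, for degree--$0$ cycles $z,z'$ the product $z\times z'$ lands in the Chow group of the summand $t(X)\otimes t(X)$ of the motive of $X\times X$, on which $\iota$ acts by the (signed) interchange. I would then rephrase the desired identity $\bigl(\mathrm{id}-(-1)^n\iota_\ast\bigr)(z\times z')=0$ as the vanishing of the Chow group of the ``wrong--sign'' summand $N$ cut out inside $t(X)\otimes t(X)$ by the projector $\tfrac12\bigl(\mathrm{id}-(-1)^n\iota_\ast\bigr)$, which is $\wedge^2 t(X)$ for $n$ even and $\mathrm{Sym}^2 t(X)$ for $n$ odd.

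The second step is to read off the content of lemma \ref{hodge}. The operator occurring there is exactly $\mathrm{id}+(-1)^{n+1}\iota_\ast=\mathrm{id}-(-1)^n\iota_\ast$, that is, twice the projector defining $N$, and the lemma asserts that it carries $H^{2n}(X\times X)$ into $F^1$ --- equivalently, it annihilates the one--dimensional top Hodge space $H^{2n,0}(X\times X)$, which is spanned by $\omega\otimes\omega$ for $\omega$ a generator of $H^{n,0}(X)$. Hence the realisation of $N$ has no $(2n,0)$--part, i.e. $H^\ast(N)$ has Hodge coniveau $\ge 1$. The problem is thereby reduced to a clean motivic assertion: a finite--dimensional motive of weight $2n$ whose cohomology has coniveau $\ge 1$ has no homologically trivial $0$--cycles.

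For the third step I would invoke the Kimura machinery. Finite--dimensionality of $h(X)$ is inherited by tensor, symmetric and exterior powers, so $N$ is finite--dimensional, and Kimura's nilpotence theorem then shows that every homologically trivial self--correspondence of $N$ is nilpotent; this is the working substitute for the (conjectural) Bloch--Beilinson filtration. When the transcendental cohomology is small this is enough to finish: for instance if $\dim H^n_{tr}(X)=2$ then $H^n_{tr}(X)=H^{n,0}\oplus H^{0,n}$, so for $n$ even $\wedge^2 H^n_{tr}(X)$ is one--dimensional of pure Hodge type $(n,n)$; finite--dimensionality then splits off the corresponding Lefschetz summand $\mathbb L^n$ of $N$, on which homologically trivial cycles vanish tautologically, and the conjecture follows.

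The main obstacle is precisely the motivic assertion isolated at the end of step two. Finite--dimensionality by itself is \emph{not} known to force the Chow group of a coniveau $\ge 1$ motive to vanish: that implication is the generalised Bloch conjecture, which is open in general. Any proof in concrete examples must therefore supply an additional geometric ingredient --- very small transcendental cohomology (so that $\wedge^2$ or $\mathrm{Sym}^2$ of $H^n_{tr}(X)$ is forced to be of Lefschetz type, as above), or a domination of $X$ by a product of curves or by an abelian variety that makes $N$ vanish outright --- and identifying which surfaces of geometric genus one carry such structure is where the genuinely new content of the argument will lie.
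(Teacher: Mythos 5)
You were handed the conjecture itself, and the paper never proves it in general either --- it only verifies special cases --- so your decision to argue conditionally, under finite--dimensionality plus an extra geometric hypothesis, is the right reading of the problem. Your conditional argument is in substance the paper's proposition \ref{rho} (surfaces with $p_g=1$, $q=0$, finite--dimensional motive and $\rho$--maximal, the last condition being exactly your hypothesis that $H^2_{tr}(X)$ is $2$--dimensional): both arguments rest on lemma \ref{hodge} (equivalently lemma \ref{hodge2}), on an algebraicity statement for Hodge classes in $W\otimes W$ where $W=H^2_{tr}(X)$, and on the nilpotence theorem \ref{nilp}. The difference is packaging: you split the motive (cut out the wrong--sign summand $N\subset t_2(X)\otimes t_2(X)$ and try to identify it with a Lefschetz motive), whereas the paper stays at the level of correspondences (it shows $(\Delta_{X\times X}-\Gamma_\iota)\circ(\pi_2\times\pi_2)$ is homologically equivalent to a cycle $\gamma$ supported on divisor $\times$ divisor, applies nilpotence, expands the resulting power, and uses that $\gamma$ cannot act on $0$--cycles). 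Your closing diagnosis --- that the general statement is an instance of the generalized Bloch conjecture, and that each concrete case needs its own geometric input --- also matches the paper, whose other cases run through Kummer quotients, Shioda--Inose structures, Voisin's fourfold construction, and the algebraic Kuga--Satake correspondence.

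The genuine gap is the sentence ``finite--dimensionality then splits off the corresponding Lefschetz summand $\mathbb{L}^n$ of $N$''. Splitting off $\mathbb{L}^n$ means producing morphisms $\mathbb{L}^n\to N\to \mathbb{L}^n$ composing to the identity, i.e.\ exhibiting an algebraic cycle of codimension $n$ on $X\times X$ whose cohomology class spans the line $\wedge^2 H^n_{tr}(X)$; that is an instance of the Hodge conjecture, and finite--dimensionality can never supply it (finite--dimensionality is conjecturally true for all varieties, while the algebraicity of these classes is open; nilpotence only finishes the job \emph{after} an algebraic representative is in hand, by killing the cohomologically trivial complement of $\mathbb{L}^n$ inside $N$). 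This is precisely where the paper inserts its extra input: lemma \ref{WHC} and proposition \ref{support}, asserting that in the $\rho$--maximal case the Hodge classes in $W\otimes W$ come from the algebraic projector $\pi_2^{tr}$, so that the offending correspondence is supported on divisors. You should moreover be aware that this point is delicate even there: for a $\rho$--maximal surface the intersection form on $W$ is positive definite, so $W$ has complex multiplication by an imaginary quadratic field; in the conjugation argument of lemma \ref{WHC} invariance actually forces $\mu=\bar{\lambda}$ rather than $\lambda=\mu$, and besides the symmetric class of $\pi_2^{tr}$ there is a second, antisymmetric rational Hodge class spanning $\wedge^2 W$ itself (the CM endomorphism). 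It is exactly the algebraicity of this antisymmetric class that your splitting requires, and it is not a formal consequence of finite--dimensionality; supplying it, or circumventing it by genuinely geometric constructions as the paper's remaining propositions do, is where the real content of any proof must lie.
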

    
   Loosely speaking: we have that almost all $0$--cycles are $(-1)^n \iota$--invariant.
   Conjecture \ref{inv2} is proven by Voisin for Kummer surfaces, and for a certain $10$--dimensional family of $K3$ surfaces \cite{V9}, obtained by desingularizing a double cover of $\PP^2$ branched along $2$ cubics.  
   
  The aim of this note is to add some more cases to the list of examples where conjecture \ref{inv2} is verified. The main ingredient we use is the theory of finite--dimensional motives of Kimura and O'Sullivan \cite{Kim}, \cite{An}, which did not exist at the time \cite{V9} was written.\footnote{Though reading with hindsight, it is clear that \cite{V9} already contains, {\sl avant la lettre\/}, many of the ideas of the theory of finite--dimensional motives -- in particular, the idea of considering the action of the symmetric group $S_k$ on the Chow groups of the product $X^k$.}

 \begin{nonumberingp}[(=propositions \ref{rho}, \ref{kunev}, \ref{SI}, \ref{reducible2} and \ref{6lines})] Let $X$ be one of the following: 
 
 \noindent
 {(\rom1)}
 a surface with $p_g=1$, $q=0$ which is $\rho$--maximal (in the sense of \cite{Beau2}) and has finite--dimensional motive (in the sense of \cite{Kim});
  
  \noindent
  {(\rom2)} a Kunev surface \cite{Tod};
  
  \noindent{(\rom3)}
  a $K3$ surface with a Shioda--Inose structure (for example, a $K3$ with Picard number $19$ or $20$);
  
  \noindent{(\rom4)}
  a $K3$ surface obtained from a double cover of $\PP^2$ branched along the union of an irreducible quadric and an irreducible quartic;
  
  \noindent{(\rom5)}
  a $K3$ surface obtained from a double cover of $\PP^2$ branched along $6$ lines.
  
  Then conjecture \ref{inv2} is true for $X$.
 \end{nonumberingp}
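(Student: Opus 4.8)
The plan is to reduce Conjecture \ref{inv2} (for $n=2$, where it reads $z\times z'=z'\times z$) to the vanishing of a single summand of the Chow group of $X\times X$, and then to extract that vanishing from finite--dimensionality. First I would invoke the refined Chow--K\"unneth decomposition available for a surface with $q=0$, writing $\mathfrak{h}(X)=\mathfrak{h}^0(X)\oplus\mathfrak{h}^2(X)\oplus\mathfrak{h}^4(X)$ and $\mathfrak{h}^2(X)=\mathfrak{h}^2_{alg}(X)\oplus t_2(X)$, with $t_2(X)$ the transcendental motive. The decisive feature is that homologically trivial $0$--cycles are governed entirely by $t_2(X)$: standard properties of the refined decomposition give $A^2\big(t_2(X)\big)=A^2_{hom}(X)_{\QQ}$, so $z,z'\in A^2_{hom}(X)$ may be regarded as classes in $A^2(t_2(X))$. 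Hence $z\times z'\in A^4\big(t_2(X)\otimes t_2(X)\big)$, and under the splitting $t_2(X)\otimes t_2(X)=\mathrm{Sym}^2 t_2(X)\oplus\wedge^2 t_2(X)$ induced by the involution $\iota$, the difference $z\times z'-z'\times z$ lands in the antisymmetric summand. Thus Conjecture \ref{inv2} for $X$ is implied by
\[ A^4_{hom}\big(\wedge^2 t_2(X)\big)=0\ . \]

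The core argument, which settles case (i) and every $\rho$--maximal instance occurring in (iii)--(v), treats the case where $X$ is $\rho$--maximal, so that $H^2_{tr}(X)$ has rank $2$ and Hodge type $(2,0)+(0,2)$. Then lemma \ref{hodge} sharpens to the statement that $\wedge^2 H^2_{tr}(X)$ is one--dimensional of pure type $(2,2)$, i.e.\ of Tate type. Since $X$ has finite--dimensional motive, so does $t_2(X)$; being an even finite--dimensional motive of rank $2$ (indeed $\wedge^3 t_2(X)$ has vanishing cohomology, hence is $0$ by Kimura), its top exterior power $\wedge^2 t_2(X)$ is $\otimes$--invertible. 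An invertible finite--dimensional motive whose cohomology is $\QQ(-2)$ must be isomorphic to $\mathbb{L}^2$: using that homological and numerical equivalence agree for finite--dimensional motives \cite{Kim}, the numerical isomorphism with $\mathbb{L}^2$ lifts to Chow morphisms $f\colon\mathbb{L}^2\to\wedge^2 t_2(X)$ and $g$ in the reverse direction with $gf=\mathrm{id}$ (the identity being detected in the one--dimensional ring $\mathrm{End}(\mathbb{L}^2)=\QQ$), so $\mathbb{L}^2$ splits off and its complement, being numerically trivial and finite--dimensional, vanishes by Kimura nilpotence. As $A^4(\mathbb{L}^2)=CH^2(\mathrm{pt})_{\QQ}=0$, the antisymmetric cycle vanishes and Conjecture \ref{inv2} follows. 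I would stress that \emph{no} instance of the Hodge conjecture enters here: finite--dimensionality by itself promotes the Tate--type cohomology of $\wedge^2 t_2(X)$ to an actual motivic splitting.

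For the remaining cases the transcendental rank exceeds $2$ and $\wedge^2 t_2(X)$ acquires a $(3,1)+(1,3)$ part, so the clean argument collapses; here the strategy is to trade $X$ for an abelian variety. In case (iii) with Picard number $19$, a Shioda--Inose structure furnishes an abelian surface $A$ together with a Hodge isometry $H^2_{tr}(X)\cong H^2_{tr}(A)$, and finite--dimensionality is exactly what lets me upgrade this to a motivic isomorphism $t_2(X)\cong t_2(A)$, whence $\wedge^2 t_2(X)\cong\wedge^2 t_2(A)$. For the Kunev surfaces of (ii) I would proceed analogously through the surface of abelian type attached by the Todorov construction \cite{Tod}, and for the double--plane K3 surfaces of (iv)--(v) through the abelian variety implicit in the branch configuration; in each of (ii), (iv), (v) one must also \emph{establish} finite--dimensionality of $X$, by exhibiting it as dominated by, or motivically assembled from, abelian varieties and products of curves. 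Once $\wedge^2 t_2(X)$ is realised as a summand of the motive of an abelian variety, the required vanishing $A^4_{hom}\big(\wedge^2 t_2(X)\big)=0$ follows from the Beauville decomposition of the Chow ring of that abelian variety together with the known graded pieces of its Bloch--Beilinson filtration, the absence of a $(4,0)$ class (lemma \ref{hodge}) placing the cycle in a weight where the relevant graded group is zero.

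The hard part will be precisely this passage to abelian varieties in the higher--rank cases. For $\rho$--maximal surfaces finite--dimensionality does everything, but when $H^2_{tr}(X)$ has rank $3$ two genuinely harder ingredients are needed: first, the \emph{motivic} lift of the Hodge--theoretic Shioda--Inose (respectively Kummer or Todorov) isomorphism, which is not formal and leans essentially on Kimura's nilpotence theorem to rigidify a correspondence known to be an isomorphism only on cohomology; and second, the abelian--variety vanishing for $\wedge^2 t_2(A)$, which is where the full weight of the Bloch--Beilinson philosophy is actually spent. Verifying finite--dimensionality for the individual families in (ii), (iv) and (v) --- rather than assuming it as in (i) --- is the other place where the real work is concentrated.
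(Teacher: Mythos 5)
The decisive step in your treatment of case (\rom1) --- and implicitly of every case you funnel through it --- is the claim that an invertible finite--dimensional motive with cohomology $\QQ(-2)$ must be isomorphic to $\mathbb{L}^2$, together with the emphatic assertion that ``no instance of the Hodge conjecture enters here''. This is where the argument breaks. A morphism $\mathbb{L}^2\to\wedge^2 t_2(X)$, in any of $\mathcal{M}_{rat}$, $\mathcal{M}_{hom}$, $\mathcal{M}_{num}$, is by definition an element of $(p_\wedge)_\ast A^2(X\times X)$ (modulo the relevant equivalence), where $p_\wedge$ is the antisymmetrizing projector on $t_2(X)\otimes t_2(X)$; for such a morphism to be nonzero even homologically or numerically, one needs an algebraic cycle on $X\times X$ whose cohomology class has nonzero component in the one--dimensional space $\wedge^2 W\subset H^4(X\times X)$, where $W=H^2(t_2(X))$. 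The existence of such a cycle is precisely an instance of the Hodge conjecture for $X\times X$ (the generator of $\wedge^2 W$ is a Hodge class of type $(2,2)$), and finite--dimensionality cannot manufacture it: Kimura's theorem \ref{nilp} only lifts relations and isomorphisms that are already realized by algebraic correspondences modulo homological or numerical equivalence. Semi--simplicity of $\mathcal{M}_{num}$ does not help either: if no such cycle exists, then $\wedge^2 t_2(X)$ and $\mathbb{L}^2$ are simply two non--isomorphic simple objects, which contradicts nothing; and note that the one evident algebraic class, that of $\pi_2^{tr}$, lies in $\mathrm{Sym}^2 W$ and therefore dies under $p_\wedge$, so it cannot serve. (Your appeal to ``homological and numerical equivalence agree for finite--dimensional motives'' is also not a theorem of \cite{Kim}; what is known is that numerically trivial correspondences on such motives are nilpotent.) This missing algebraicity input is exactly where the paper invests its effort in case (\rom1): lemma \ref{WHC} and proposition \ref{support} are devoted to producing, from $\rho$--maximality, algebraic representatives supported on divisors for the relevant Hodge classes, and only afterwards is nilpotence invoked to pass from an identity in $H^8(X^4)$ to one in $A^4(X^4)$.

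Your reductions for (\rom2)--(\rom4) have a second, independent gap: they all require finite--dimensionality of the motive of $X$ (or of an associated surface), which is an open problem for Kunev surfaces, for $K3$ surfaces with a Shioda--Inose structure of Picard number $17$ or $18$, and for the $K3$ surfaces of case (\rom4), whose generic Picard number is $9$; moreover no abelian variety is known to be motivically attached to the surfaces in (\rom2) and (\rom4), and ``upgrading a Hodge isometry to a motivic isomorphism by finite--dimensionality'' repeats the error above --- one first needs the isometry to be induced by an algebraic correspondence. The paper's proofs are engineered to avoid precisely these issues: for (\rom3) it uses Voisin's theorem that a Nikulin (symplectic) involution acts trivially on $A^2$ \cite{V11}, so that $A^2_{hom}(X)\cong A^2_{hom}(Y)$ for the Kummer quotient $Y$, and then her result for Kummer surfaces \cite{V9} --- no finite--dimensionality whatsoever; for (\rom2) it identifies $A^2_{hom}$ of the Kunev surface with $A^2_{hom}$ of its associated double--plane $K3$ by a spreading--out argument in the family of weighted complete intersections (proposition \ref{same}), then quotes Voisin's theorem for that $K3$; for (\rom4) it runs Voisin's geometric argument (Shioda map to a sextic fourfold, involutions, reduction to the vanishing of anti--invariant $0$--cycles on Calabi--Yau threefolds). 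Only in case (\rom5) does your outline essentially coincide with the paper's proof: there Paranjape's algebraic Kuga--Satake correspondence and the domination by a product of curves \cite{Par} supply both the algebraic cycle and the finite--dimensionality, after which semi--simplicity, nilpotence and the Beauville--decomposition argument (proposition \ref{abcod2}) conclude.
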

 
 Some explicit examples of families of surfaces of general type satisfying hypothesis (\rom1) are given in remark \ref{bonf}.
 A Kunev surface is a certain surface of general type with $q=0$ and $p_g=1$, these surfaces form a $12$--dimensional family \cite{Tod} (cf. definition \ref{kunev} for a precise definition). The generic member of a $K3$ surface as in (\rom4) has Picard number $9$. I am not aware of any $K3$ surface of Picard number less than $9$ for which conjecture \ref{inv2} is known, so obviously there is a lot of work remaining to be done ! 

\vskip0.6cm

\begin{convention} In this note, all varieties will be quasi--projective irreducible algebraic variety over $\C$, endowed with the Zariski topology. A {\sl subvariety\/} is a (possibly reducible) reduced subscheme which is equidimensional. 

{\bf All Chow groups will be with rational coefficients}: we will denote by $A_j(X)$ the Chow group of $j$--dimensional cycles on $X$ with $\QQ$--coefficients; for $X$ smooth of dimension $n$ the notations $A_j(X)$ and $A^{n-j}(X)$ will be used interchangeably. 

The notation $A^j_{hom}(X)$, resp. $A^j_{AJ}(X)$ will be used to indicate the subgroups of homologically trivial, resp. Abel--Jacobi trivial cycles.
For a morphism $f\colon X\to Y$, we will write $\Gamma_f\in A_\ast(X\times Y)$ for the graph of $f$.

In an effort to lighten notation, we will often write $H^j(X)$ or $H_j(X)$ to indicate singular cohomology $H^j(X,\QQ)$ resp. Borel--Moore homology $H_j(X,\QQ)$.

\end{convention}

\section{Finite--dimensional motives}

We refer to \cite{Kim}, \cite{An}, \cite{I}, \cite{MNP} for the definition of finite--dimensional motive. 
What mainly concerns us here is the nilpotence theorem, which embodies a crucial property of varieties with finite--dimensional motive:

\begin{theorem}[Kimura \cite{Kim}]\label{nilp} Let $X$ be a smooth projective variety of dimension $n$ with finite--dimensional motive. Let $\Gamma\in A^n(X\times X)_{}$ be a correspondence which is numerically trivial. Then there is $N\in\NN$ such that
     \[ \Gamma^{\circ N}=0\ \ \ \ \in A^n(X\times X)_{}\ .\]
\end{theorem}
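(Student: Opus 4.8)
The plan is to regard $\Gamma$ as an endomorphism of the Chow motive $M:=h(X)=(X,\Delta_X,0)$, so that $A^n(X\times X)=\operatorname{End}(M)$ with composition of correspondences as ring multiplication, and to prove the purely motivic statement that a numerically trivial endomorphism of a finite--dimensional motive is nilpotent. Two ingredients drive the argument. First, numerical triviality forces all the ``power traces'' of $\Gamma$ to vanish. Second, finite--dimensionality supplies a Cayley--Hamilton type identity inside the tensor category of motives, which converts this vanishing into genuine nilpotence; over $\QQ$--coefficients the two are linked by Newton's identities. It is the second ingredient that makes essential use of the hypothesis.

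For the first ingredient I would observe that the numerically trivial self--correspondences form a two--sided ideal of $\operatorname{End}(M)$ under composition: if $\alpha$ is numerically trivial and $\beta$ is arbitrary, then adjunction for the numerical pairing (which moves $\beta$ onto its transpose ${}^t\beta$) shows that $\beta\circ\alpha$ and $\alpha\circ\beta$ are again numerically trivial. In particular each power $\Gamma^{\circ k}$ with $k\ge 1$ is numerically trivial, whence
   \[ \operatorname{tr}(\Gamma^{\circ k})=\langle \Gamma^{\circ k},\Delta_X\rangle = 0\qquad(k\ge 1)\ .\]
Thus all Newton power sums of $\Gamma$ vanish.

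For the second ingredient, finite--dimensionality gives a decomposition $M=M^+\oplus M^-$ with $\wedge^{a+1}M^+=0$ and $\operatorname{Sym}^{b+1}M^-=0$ for suitable $a,b$. In a $\QQ$--linear rigid tensor category the vanishing of an exterior (resp. symmetric) power of an object yields a Cayley--Hamilton relation: every endomorphism satisfies a monic polynomial whose coefficients are the traces of its exterior (resp. symmetric) powers. Newton's identities express these coefficients as universal $\QQ$--polynomials in the power sums $\operatorname{tr}(\Gamma^{\circ k})$, which all vanish by the previous step; hence the characteristic polynomial of $\Gamma$ reduces to $t^N$, giving $\Gamma^{\circ N}=0$.

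The main obstacle is precisely this last passage, namely setting up the tensor--categorical Cayley--Hamilton identity and controlling the interaction of the even and odd summands. One must check that the off--diagonal blocks $e^{\pm}\circ\Gamma\circ e^{\mp}$ do not spoil nilpotence: each diagonal block is a numerically trivial endomorphism of $M^{\pm}$ and is nilpotent by the evenly/oddly finite--dimensional case, while the mixed terms are governed by the Koszul sign conventions in the commutativity constraint on $M=M^+\oplus M^-$. Carrying this out rigorously is the technical heart of Kimura's theory, and it is exactly here that finite--dimensionality --- rather than mere numerical triviality --- is indispensable.
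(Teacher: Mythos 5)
There is nothing in the paper to compare your argument with: theorem \ref{nilp} is imported as a black box from \cite{Kim}, and the paper never reproves it, so the benchmark is Kimura's own argument --- whose overall strategy your sketch follows. Your first two ingredients are correct and correctly justified: numerically trivial correspondences form a two-sided ideal (cyclicity of the trace, i.e.\ Lieberman-type adjunction), hence $\operatorname{tr}(\Gamma^{\circ k})=0$ for all $k\ge 1$; and on an evenly (resp.\ oddly) finite-dimensional summand, the vanishing of $\wedge^{a+1}M^+$ (resp.\ $\operatorname{Sym}^{b+1}M^-$) yields a Cayley--Hamilton identity whose coefficients vanish by Newton's identities over $\QQ$, so a numerically trivial endomorphism of $M^+$ or of $M^-$ is nilpotent, with index bounded in terms of $a$, resp.\ $b$.

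The genuine gap is the passage from $M^{\pm}$ back to $M=M^+\oplus M^-$, and your proposed repair --- treat the diagonal blocks by the pure even/odd case and control the off-diagonal blocks by ``Koszul sign conventions'' --- would not close it, because the difficulty is not one of signs. Writing $\Gamma$ in block form, $\Gamma^{\circ k}$ is a sum over all length-$k$ words in the four (numerically trivial) blocks, and in a noncommutative ring a sum of individually nilpotent elements need not be nilpotent; so knowing that $e^{\pm}\circ\Gamma\circ e^{\pm}$ and each round trip $e^{\pm}\circ\Gamma\circ e^{\mp}\circ\Gamma\circ e^{\pm}$ is nilpotent is not sufficient. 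What the reduction actually requires is the \emph{ideal-theoretic} strengthening of your even/odd statement: a uniform $r$ such that \emph{any} composition of $r$ possibly distinct numerically trivial endomorphisms of $M^+$ (resp.\ $M^-$) vanishes, i.e.\ the numerical ideal of $\operatorname{End}(M^{\pm})$ is a nilpotent ideal, not merely a nil ideal of bounded index. This is a theorem, not formal algebra: it is obtained by polarizing the Cayley--Hamilton identity and running the Nagata--Higman argument (a nil algebra of bounded index over a field of characteristic $0$ is nilpotent) --- a second place where $\QQ$-coefficients are essential. Granting it, the even/odd reduction is elementary and sign-free: in any word occurring in $\Gamma^{\circ k}$, group each excursion through $M^-$ into a single numerically trivial endomorphism of $M^+$ (legitimate by the ideal property); a word of length $k$ visits $M^+$ or $M^-$ at least $\lceil (k+1)/2\rceil$ times, so for $k$ large every word contains a composition of at least $r$ elements of the numerical ideal of $\operatorname{End}(M^+)$ or of $\operatorname{End}(M^-)$ and therefore vanishes. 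The Koszul signs you invoke do occur, but only inside the proof of the symmetric-power (odd) Cayley--Hamilton identity, not in this reduction step, which is the point your outline leaves unproved.
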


 Actually, the nilpotence property (for powers of $X$) could serve as an alternative definition of finite--dimensional motive, as shown by a result of Jannsen \cite[Corollary 3.9]{J4}.

  Conjecturally, any variety has finite--dimensional motive \cite{Kim}. We are still far from knowing this, but at least there are quite a few non--trivial examples:
 
\begin{remark} 
The following varieties have finite--dimensional motive: varieties dominated by products of curves \cite{Kim}, $K3$ surfaces with Picard number $19$ or $20$ \cite{P}, surfaces not of general type with vanishing geometric genus \cite[Theorem 2.11]{GP}, Godeaux surfaces \cite{GP}, Catanese and Barlow surfaces \cite{V8}, many examples of surfaces of general type with $p_g=0$ \cite{PW}, Hilbert schemes of surfaces known to have finite--dimensional motive \cite{CM}, generalized Kummer varieties \cite[Remark 2.9(\rom2)]{Xu},
 3--folds with nef tangent bundle \cite{Iy} or \cite[Example 3.16]{V3}, 4--folds with nef tangent bundle \cite{Iy2}, log--homogeneous varieties in the sense of \cite{Br} (this follows from \cite[Theorem 4.4]{Iy2}), certain 3--folds of general type \cite[Section 8]{V5}, varieties of dimension $\le 3$ rationally dominated by products of curves \cite[Example 3.15]{V3}, varieties $X$ with Abel--Jacobi trivial Chow groups (i.e. $A^i_{AJ}(X)_{}=0$ for all $i$) \cite[Theorem 4]{V2}, products of varieties with finite--dimensional motive \cite{Kim}.
\end{remark}

\section{Surfaces that are $\rho$--maximal}
\label{secmain}

\begin{definition}[\cite{Beau2}] A smooth projective variety $X$ is said to be $\rho$--maximal if the rank $\rho$ of the Neron--Severi group is equal to the Hodge number $h^{1,1}$. 
\end{definition}

\begin{proposition}\label{rho} Let $X$ be a smooth projective variety of dimension $2$ with $p_g=1$ and $q=0$. Assume that $X$ has finite--dimensional motive, and that $X$ is $\rho$--maximal. Then for any $z,z^\prime\in A^2_{hom}(X)_{}$, one has
  \[  z\times z^\prime =z^\prime\times z\ \ \hbox{in}\ A^4(X\times X)_{}\ .\]
\end{proposition}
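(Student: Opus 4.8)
The plan is to run the whole argument at the level of the transcendental motive, exploiting that $\rho$--maximality forces this motive to be as small as possible. First I would fix a Chow--Künneth decomposition $\mathfrak h(X)=\mathfrak h^0\oplus\mathfrak h^2\oplus\mathfrak h^4$; since $q=0$ there is no odd cohomology, and one refines the middle piece into its algebraic and transcendental parts $\mathfrak h^2=\mathfrak h^2_{alg}\oplus\mathfrak t$, with associated projectors $\pi^{alg}_2,\pi_{tr}\in A^2(X\times X)$. The projector $\pi^{alg}_2$ is supported on (divisor)$\times$(divisor), so a direct intersection computation shows that $(\pi^{alg}_2)_\ast$ annihilates $0$--cycles; together with the vanishing of $(\pi_0)_\ast$ and $(\pi_4)_\ast$ on homologically trivial $0$--cycles, this gives the standard identification $A^2_{hom}(X)=(\pi_{tr})_\ast A^2(X)=A^2(\mathfrak t)$. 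Consequently, for $z,z'\in A^2_{hom}(X)$ one has $z\times z'=(\pi_{tr}\otimes\pi_{tr})_\ast(z\times z')\in A^4(\mathfrak t\otimes\mathfrak t)$, and the cycle to be killed, $z\times z'-z'\times z$, is anti--invariant under the swap $\iota$ and therefore lies in $A^4(\wedge^2\mathfrak t)$.

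It thus suffices to prove that $A^4(\wedge^2\mathfrak t)=0$, and the plan is to do this by identifying $\wedge^2\mathfrak t\cong\mathbb L^2$ as Chow motives, where $\mathbb L$ is the Lefschetz motive. Here $\rho$--maximality enters decisively: it forces $H^\ast(\mathfrak t)=H^{2,0}(X)\oplus H^{0,2}(X)$ to be two--dimensional, so that $H^\ast(\wedge^2\mathfrak t)$ is one--dimensional and of pure Hodge--Tate type $(2,2)$, i.e. isomorphic to $\mathbb Q(-2)$ as a Hodge structure. Granting the motivic isomorphism $\wedge^2\mathfrak t\cong\mathbb L^2$, we obtain $A^4(\wedge^2\mathfrak t)=A^4(\mathbb L^2)=A^2(\mathrm{point})=0$, whence $z\times z'=z'\times z$.

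The main obstacle is precisely the passage from this cohomological computation to the motivic isomorphism $\wedge^2\mathfrak t\cong\mathbb L^2$: the generator of $H^\ast(\wedge^2\mathfrak t)$ is only \emph{a priori} a Hodge class, and I want to avoid invoking the Hodge conjecture on $X\times X$. This is exactly where finite--dimensionality is indispensable. The intersection product furnishes an \emph{algebraic} pairing $\mathfrak t\otimes\mathfrak t\to\mathbb L^2$ which realizes the polarization in cohomology, hence a morphism $\mathfrak t\to\mathfrak t^\vee\otimes\mathbb L^2$ that is an isomorphism on cohomology; by Kimura's nilpotence theorem (Theorem \ref{nilp}) any morphism of finite--dimensional motives that is an isomorphism in cohomology is already an isomorphism of motives, so $\mathfrak t^\vee\cong\mathfrak t\otimes\mathbb L^{-2}$. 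Since $\mathfrak t$ is evenly finite--dimensional of rank two (i.e. $\dim H^\ast(\mathfrak t)=2$), it satisfies the rank--two determinant formalism, in particular the perfect wedge pairing gives $\mathfrak t^\vee\cong\mathfrak t\otimes(\wedge^2\mathfrak t)^{-1}$. Comparing the two descriptions of $\mathfrak t^\vee$ and taking determinants yields $(\wedge^2\mathfrak t)^{\otimes 2}\cong\mathbb L^4$, and the Hodge--Tate computation above fixes the ambiguity so that $\wedge^2\mathfrak t\cong\mathbb L^2$. The delicate and essential point throughout is that finite--dimensionality, via nilpotence, upgrades cohomological identities to honest isomorphisms of Chow motives; the remainder is bookkeeping with the Chow--Künneth projectors and the rank--two exterior algebra.
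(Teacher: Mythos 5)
Your reduction to the vanishing of $A^4(\wedge^2\mathfrak t)$ (with $\mathfrak t=t_2(X)$) is correct, and the formal steps you run are individually justifiable: $\rho$--maximality does force $\dim_\QQ H^\ast(\mathfrak t)=2$, finite--dimensionality (via theorem \ref{nilp} and the usual idempotent arguments) does show that $\wedge^2\mathfrak t$ is an invertible motive, that $\mathfrak t^\vee\cong\mathfrak t\otimes\mathbb{L}^{-2}$ and $\mathfrak t^\vee\cong\mathfrak t\otimes(\wedge^2\mathfrak t)^{-1}$, and comparing these one does get $(\wedge^2\mathfrak t)^{\otimes 2}\cong\mathbb{L}^4$. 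The proof collapses at the last sentence: ``the Hodge--Tate computation above fixes the ambiguity so that $\wedge^2\mathfrak t\cong\mathbb{L}^2$.'' This is not a deduction. What you know at that point is only that $\wedge^2\mathfrak t\cong\mathbb{L}^2\otimes P$ with $P$ invertible, $P^{\otimes 2}\cong\mathbf{1}$, and $P$ of trivial realization; the realization is blind to such a ``phantom'' square root of $\mathbf{1}$, so nothing you have written rules out $P\not\cong\mathbf{1}$. And your target group is exactly $A^4(\wedge^2\mathfrak t)=\mathrm{Hom}(\mathbb{L}^4,\mathbb{L}^2\otimes P)=\mathrm{Hom}(\mathbb{L}^2,P)$, which vanishes when $P\cong\mathbf{1}$ but is otherwise precisely the group whose vanishing is to be proven. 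The argument begs the question at the decisive moment.

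The gap cannot be closed by more tensor formalism, because an isomorphism $\wedge^2\mathfrak t\cong\mathbb{L}^2$ --- even merely modulo homological equivalence --- is by definition an algebraic cycle in $A^2(X\times X)$ whose cohomology class projects non--trivially onto the line $\wedge^2 W\subset W\otimes W\subset H^4(X\times X)$, where $W=H^2(\mathfrak t)$. So the isomorphism you assert is equivalent to the algebraicity of the Hodge class spanning $\wedge^2 W$: an instance of the Hodge conjecture (concretely, $W$ carries complex multiplication by an imaginary quadratic field, and this class is the one induced by the skew--adjoint CM endomorphism of $W$). Nilpotence can upgrade an algebraic homological isomorphism to an isomorphism of Chow motives, but it cannot create the algebraic morphism in the first place, and finite--dimensionality is not known to imply any case of the Hodge conjecture. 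This is exactly where the paper's proof does its real geometric work: lemma \ref{WHC} and proposition \ref{support} are there to show that the Hodge classes in $W\otimes W$ are represented by algebraic cycles (indeed by cycles supported on divisor times divisor), by exhibiting an explicit cycle, namely $\pi_2^{tr}$ itself; only then does the paper run a nilpotence argument similar in spirit to yours. (Even there the point is delicate: the class of the symmetric projector $\pi_2^{tr}$ lies in $\mathrm{Sym}^2 W$, whereas the class your argument needs generates $\wedge^2 W$, so the algebraicity of the antisymmetric Hodge class is genuinely the crux --- for the paper just as for you.) In short, everything up to $(\wedge^2\mathfrak t)^{\otimes 2}\cong\mathbb{L}^4$ is fine, but the final square--root step is a genuine gap, and filling it requires producing an actual algebraic cycle on $X\times X$, which is what the paper's lemma \ref{WHC} is designed to supply.
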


\begin{proof} 
Let $\iota$ denote the involution on $X\times X$ exchanging the two factors. The action of $\iota$ on cohomology is well--understood:

\begin{lemma}\label{hodge} Let $X$ be a surface with $q=0$ and $p_g=1$. We have
      \[  ( \Delta_{X\times X}- \Gamma_\iota )_\ast H^{4}(X\times X)\ \subset\  H^{4}(X\times X)\cap F^1\ \]
      (here $F^\ast$ denotes the Hodge filtration on $H^\ast(-,\C)$).
      \end{lemma}
     
     \begin{proof} The only summand in the K\"unneth decomposition of $H^{4}(X\times X)$ that is not in $F^1$ is $H^2X\otimes H^2X$. The correspondence
       \[ ( \Delta_{X\times X}- \Gamma_\iota )\]
       acts on
       \[  \ima\bigl( H^2X\otimes H^2X\ \to\ H^{4}(X\times X)\bigr) \]
       as twice the projector onto $\wedge^2 H^2X$. The lemma now follows from the following, which is \cite[Lemma 4.36]{Vo}.        
                
   \begin{lemma}\label{hodge2} Let $H$ be a Hodge structure of weight $n$ and with $\dim H^{n,0}=1$. Then the Hodge structure of weight $2n$ on 
   $\wedge^2 H$ has coniveau $\ge 1$.
   \end{lemma}
      \end{proof}        
 
 The $\rho$--maximality condition is used in the following guise:
  
 \begin{proposition}\label{support} Let $X$ be a $\rho$--maximal surface. Let $\alpha$ be a Hodge class
   \[ \alpha\in \Bigl((H^4(X\times X)\cap F^1) \otimes (H^4(X\times X)\cap F^1)\Bigr)\ \cap F^4  \ .\]
   Then there exists a divisor $D\subset X\times X$, and a cycle class $\gamma\in A_4(D\times D)$ such that
     \[  \gamma= \alpha\ \ \hbox{in}\ H^8(X^4)\ .\]
     \end{proposition}
     
  \begin{proof}
  Let 
  \[h^2=h^2_{alg}\oplus t_2(X)\]
 denote the decomposition of Chow motives as in \cite{KMP}, i.e. $t_2(X)=(X,\pi_2^{tr},0)$ is the transcendental motive of $X$ in the sense of loc. cit. Then the second cohomology group decomposes
  \[ H^2(X)= NS(X) \oplus W\ ,\]
  where $W=H^2(t_2(X))$.
 The $\rho$--maximality of $X$ implies that $W$ is a $2$--dimensional $\QQ$--vector space, since
 \[  W_{\C}=H^{0,2}(X)\oplus H^{2,0}(X)\ .\]
 
 We have that
   \[ \begin{split} H^4(X\times X)\cap F^1 &= (H^2(X)\otimes H^2(X))\cap F^1\\
                                                                     &= NS(X)\otimes NS(X) \oplus NS(X)\otimes W \oplus W\otimes NS(X) \oplus (W\otimes W)\cap F^1\\
                                                                       &= NS(X)\otimes NS(X) \oplus NS(X)\otimes W \oplus W\otimes NS(X) \oplus (W\otimes W)\cap F^2\ .\\      
                                                                 \end{split}\]    
  It is easy to prove the Hodge conjecture for $(W\otimes W)\cap F^2$:
  
  \begin{lemma}\label{WHC} The $\QQ$--vector space
     \[   (W\otimes W)\cap F^2 \subset H^4(X\times X)\cap F^2 \]
     is of dimension $1$, and generated by the cycle $\pi_2^{tr}\in A^2(X\times X)$.
 \end{lemma}
 
 \begin{proof} The complex vector space
   \[ F^2 (W_\C \otimes W_\C)= H^{0,2}(X)\otimes H^{2,0}(X) \oplus H^{2,0}(X)\otimes H^{0,2}(X)\]
   is $2$--dimensional, with generators $c,d$ such that $c=\bar{d}$. Let 
     \[ a\in (W\otimes W)\cap F^2\ ,\]
     i.e. $a$ is such that the complexification $a_\C\in H^4(X\times X,\C)$ can be written
   \[   a_\C= \lambda c +\mu \bar{c}\ .\]
   But the class $a_\C$, coming from rational cohomology, is invariant under conjugation, so that $\lambda=\mu$, i.e.
   \[ \dim (W\otimes W)\cap F^2 =1\ .\]
   The class of the cycle $\pi_2^{tr}$ in $H^4(X\times X)$ lies in $W\otimes W$ because 
    $W=H^2\bigl(t_2(X)\bigr)=(\pi_2^{tr})_\ast H^2(X)$.
 \end{proof}    
     
 By assumption, $\alpha$ is a Hodge class in
   \[  \begin{split}   &(H^4(X\times X)\cap F^1) \otimes (H^4(X\times X)\cap F^1)\\
                   &= \Bigl( NS(X)\otimes NS(X) \oplus \cdots \oplus (W\otimes W)\cap F^2\Bigr) \otimes      \Bigl( NS(X)\otimes NS(X) \oplus \cdots \oplus (W\otimes W)\cap F^2\Bigr)\ .\\
                   \end{split}\]
              It follows that $\alpha$ decomposes as a sum of Hodge classes $\alpha_1+\cdots+\alpha_{16}$ in the various components; we now analyze the various components that occur. 
              
              First, suppose there is a factor
              $NS(X)$ both in the first half and in the second half of the decomposition, e.g. consider
              \[  \alpha_6\in NS(X)\otimes W \otimes NS(X)\otimes W\ .\]
              This class $\alpha_6$ can be written
              \[ \alpha_6 = D_1 \times D_2\times \hat{\alpha}_6\ \ \in H^8(X^4)\ ,\]
              with $D_1, D_2\in NS(X)$ and $\hat{\alpha}_6\in W\otimes W$. Since $\alpha_6$ is a Hodge class, so is $\hat{\alpha}_6$. But then $\hat{\alpha}_6$ is algebraic, by lemma \ref{WHC}. It thus follows that $\alpha_6$ is represented by a cycle supported on divisor times divisor in $X^4$. 
              
              Next, suppose there is a factor $NS(X)$ on one side but not on the other side, e.g. consider
              \[ \alpha_8\in NS(X)\otimes W\otimes (W\otimes W)\cap F^2\ .\]
              Then the class $\alpha_8$ can be written as
              \[ \alpha_8=D\times \hat{\alpha_8}\times t(X)\ \ \in H^8(X^4)\ .\]
              Now $\hat{\alpha}_8$ is a Hodge class in $W$, so it must be $0$.
              The remaining cases are treated similarly.
              \end{proof}
 
 Proposition \ref{rho} is now easily proven: Let $\pi_2\in A^2(X\times X)$ denote a Chow--K\"unneth projector \cite{Mur}, \cite{KMP}. Using lemma \ref{hodge} and proposition \ref{support}, one obtains an equality between algebraic cycles modulo homological equivalence:
     \[  (\Delta_{X\times X}-\Gamma_\iota) \circ (\pi_2\times \pi_2)=\gamma\ \ \hbox{in}\ H^8(X^4) \ ,\]
     where $\gamma$ is a cycle supported on $D\times D$, for some divisor $D\subset X\times X$. This is equivalent to
     \[  (\pi_2\times\pi_2)-\Gamma_\iota\circ(\pi_2\times\pi_2)-\gamma=0\ \ \hbox{in}\ H^8(X^4)\ .\]
      Using the nilpotence theorem (theorem \ref{nilp}), this implies there exists  $N\in\NN$ such that
       \begin{equation}\label{Nth}  \Bigl( (\pi_2\times \pi_2)-\Gamma_\iota\circ (\pi_2\times \pi_2) -\gamma\Bigr)^{\circ N}=0\ \ \hbox{in}\ A^4(X^4) \ .\end{equation}
             
            Without loss of generality, we may suppose $N$ is odd.
            Define an integer 
        \[ M:= 1+\binom{N}{2} +\binom{N}{4}+\cdots +\binom{N}{N-1}=1+\binom{N}{N-2}+\binom{N}{N-4}+\cdots+\binom{N}{1}\ .\]    
             Upon developing (\ref{Nth}), we find an equality of correspondences
            \begin{equation}\label{eq} M \pi_2\times \pi_2- M \Gamma_\iota\circ (\pi_2\times \pi_2)=\sum_\ell Q_\ell\ \ \hbox{in}\ A^{4}(X^4)\ ,\end{equation}
where each $Q_\ell\in A^{4}(X^4)$ is a finite composition of correspondences
           \[ Q_\ell = Q_\ell^1\circ \ldots \circ Q_\ell^{N^\prime}\ \ \in A^{4}(X^4)\ \]
      for $N^\prime\le N$,  where $Q_\ell^j\in \{ (\pi_2\times \pi_2), \Gamma_\iota\circ(\pi_2\times \pi_2), \gamma\}$,  and at least one $Q_\ell^j$ is equal to $\gamma$. The correspondence $\gamma$ (being supported on $D\times D$ for some divisor $D$) does not act on $0$--cycles, so that
      \[ (Q_\ell)_\ast A^4(X\times X)=0\ \ \hbox{for\ all\ } Q_\ell\ .\]
    Applying equation (\ref{eq}) to $0$--cycles, we thus find that
      \[ \bigl( M (\pi_2\times \pi_2-\Gamma_\iota\circ(\pi_2\times \pi_2))\bigr)_\ast A^4(X\times X)=0\ ,\]
      i.e.
      \[  (\pi_2\times \pi_2)_\ast =(\Gamma_\iota\circ(\pi_2\times \pi_2))_\ast\colon\ \ A^4(X\times X)\ \to\ A^4(X\times X)\ .\]
     Since $\pi_2\times\pi_2$ acts as the identity on cycles of type $z\times z^\prime$ with $z,z^\prime\in A^2_{hom}X$, we have thus proven that
     \[ z\times z^\prime= z^\prime\times z\ \ \hbox{in}\ A^4(X\times X)\ ,\]
     i.e. conjecture \ref{inv2} is true for $X$.

    \end{proof}

\begin{remark} In particular, it follows from proposition \ref{rho} that a $K3$ surface with Picard number $20$ verifies conjecture \ref{inv2}; we will prove a more general result later (corollary \ref{1920}). For surfaces of general type with $p_g=K_X^2=1$, Beauville shows \cite[Proposition 9]{Beau2} that the $\rho$--maximal surfaces are dense in the moduli space. It would be interesting to prove that these surfaces have finite--dimensional motive.
\end{remark}

\begin{remark}\label{bonf} In \cite{Bonf}, Bonfanti constructs 2 families of surfaces of general type to which proposition \ref{rho} applies. These are the surfaces of type b and of type d in \cite[Table 1]{Bonf}, studied in detail in \cite[Sections 3.1 and 3.3]{Bonf}. All surfaces studied in \cite{Bonf} are dominated by products of curves and, as such, they have finite--dimensional motive. The $\rho$--maximality of the surfaces of type b and of type d is established in \cite[Section 4.1]{Bonf}.
\end{remark}

\section{Some special $K3$ surfaces}

\subsection{Double planes}

\begin{proposition}[Voisin \cite{V9}]\label{reducible} Let $X$ be a desingularization of the double cover of $\PP^2$ branched along the union of two irreducible cubics. Then conjecture \ref{inv2} is true for $X$.
\end{proposition}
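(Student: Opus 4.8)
The strategy I would follow mirrors the proof of proposition \ref{rho}: reduce the statement to the vanishing of a single self--correspondence acting on $0$--cycles, establish this cohomologically via lemma \ref{hodge}, and then promote the cohomological information to rational equivalence. The two ingredients of proposition \ref{rho} that are unavailable here --- $\rho$--maximality (the general such $K3$ has Picard number $9$) and finite--dimensionality of the motive (not known for the very general member) --- will have to be replaced by inputs coming from the special geometry of the double plane.

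Concretely, $X$ is a $K3$ surface, so $q=0$, $p_g=1$, and since $n=2$ conjecture \ref{inv2} asks for $z\times z^\prime=z^\prime\times z$ with $z,z^\prime\in A^2_{hom}(X)$. Let $\pi_2^{tr}$ be the transcendental projector, so that $(\pi_2^{tr})_\ast z=z$ for $z\in A^2_{hom}(X)$, and set
\[ \Psi:=(\Delta_{X\times X}-\Gamma_\iota)\circ(\pi_2^{tr}\times\pi_2^{tr})\ \in\ A^4(X^4)\ .\]
A direct computation gives $\Psi_\ast(z\times z^\prime)=z\times z^\prime-z^\prime\times z$, so it suffices to prove that $\Psi$ acts as zero on $A^4(X\times X)$. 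Lemma \ref{hodge} supplies the cohomological input: $(\Delta_{X\times X}-\Gamma_\iota)_\ast H^4(X\times X)\subset F^1$, so that $\Psi$ is homologically of coniveau $\ge 1$.

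The geometric replacement for $\rho$--maximality is the reducibility of the branch locus. The covering involution $\sigma$ of the double cover $f\colon X\to\PP^2$ has rational quotient $X/\sigma$; hence, writing $g$ for the quotient map, $g_\ast A^2_{hom}(X)=0$, and $g^\ast g_\ast=\mathrm{id}+\sigma_\ast$ gives the clean relation $\sigma_\ast=-\mathrm{id}$ on $A^2_{hom}(X)$, with no finite--dimensionality required. Moreover the pencil of cubics $\{sC_1+tC_2=0\}$ has the following special feature: on a general pencil member $E$ one has $C_2=-\tfrac{s}{t}C_1$, so the defining equation $w^2=C_1C_2$ restricts to $w^2=c\,C_1^2$ for a constant $c$, and hence $f^{-1}(E)$ splits into two curves isomorphic to $E$ and exchanged by $\sigma$. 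Thus $X$ is swept out by a one--parameter family of elliptic curves paired by $\sigma$ (equivalently, $X$ carries an elliptic fibration obtained from the pencil after Stein factorization). This sweeping, together with $\sigma_\ast=-\mathrm{id}$, is the structure I would use to control $0$--cycles on $X$ and on $X\times X$.

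The main obstacle is the passage from the coniveau statement to an honest identity in $A^4(X^4)$: this is precisely where proposition \ref{rho} invoked Kimura's nilpotence theorem \ref{nilp}, which is not at our disposal. I would instead follow Voisin's spreading--out method over the $10$--dimensional family $\pi\colon\mathcal{X}\to B$ of these surfaces. One spreads the cycles $z,z^\prime$ and the relation $\Psi$ over $B$, and combines the fibrewise coniveau $\ge 1$ with the triviality of the relevant sub--local system of $R^\bullet\pi_\ast\QQ$ on $B$ to conclude that $\Psi_\ast(z\times z^\prime)=0$ holds in the relative Chow group modulo cycles supported over a proper closed subset of $B$; restricting to the very general fibre then gives the result for $X$. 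I expect this spreading argument --- making the fibrewise input (powered by $\sigma_\ast=-\mathrm{id}$ and the elliptic sweep) strong enough to survive the descent to a single fibre --- to be the genuinely delicate part, exactly as it is the heart of Voisin's original proof.
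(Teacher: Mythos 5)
There is a genuine gap, and it sits exactly where you yourself locate ``the genuinely delicate part'': the passage from the coniveau statement of lemma \ref{hodge} to an identity in the Chow group. Your reduction to showing that $\Psi=(\Delta_{X\times X}-\Gamma_\iota)\circ(\pi_2^{tr}\times\pi_2^{tr})$ annihilates $A^4(X\times X)$ is fine, and so are your two geometric observations (that the covering involution satisfies $\sigma_\ast=-\hbox{id}$ on $A^2_{hom}(X)$, and that members of the cubic pencil split in the double cover). But the spreading--out step cannot be run as you describe. The fibrewise input you possess is \emph{not} homological triviality of $\Psi$: its class in $H^8(X_b^4)$ is nonzero, and lemma \ref{hodge} only places it in Hodge coniveau $\ge 1$. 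To launch any spreading/decomposition argument you would first need this class to be represented, fibre by fibre, by a cycle supported on $D\times D$ for some divisor $D\subset X_b\times X_b$; that is precisely the generalized Hodge conjecture for the Hodge structure $\wedge^2 H^2(X_b)\subset H^4(X_b\times X_b)$ (more precisely for its transcendental part). This is exactly what $\rho$--maximality bought in proposition \ref{support}: there the transcendental part of $H^2$ is $2$--dimensional, so the relevant Hodge classes are visibly algebraic. Here the general member has Picard number $10$ (not $9$; that is case (\rom4), treated in proposition \ref{reducible2}), the transcendental part is $12$--dimensional, and no such algebraicity is known. Even granting it homologically, upgrading the resulting homological identity to rational equivalence requires either the nilpotence theorem \ref{nilp} (finite--dimensionality is not known for the general member of this family) or Voisin's standard conjecture \cite[Conjecture 0.6]{V0}. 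These are exactly the two obstacles the paper flags in its closing remark, and the reason the remark at the end of the double--planes subsection describes the case of an irreducible branch sextic as open. In addition, the ``triviality of the relevant sub--local system'' you invoke is unjustified (the transcendental local system of this $10$--dimensional family has large monodromy), and a spreading argument of this kind would in any case only reach the very general fibre, whereas the proposition concerns every such surface.

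Voisin's actual proof, which the paper reviews, goes by a completely different route that uses the reducibility of the branch sextic globally rather than fibrewise. Writing $f_1,f_2$ for the two cubics, one introduces the degree--$6$ cyclic cover $\Sigma=\{u^6=f_1(x)f_2(x)\}\subset\PP^3$, which maps to $X$ with degree $3$, and the sextic fourfold $W=\{f_1(x)f_2(x)=f_1(y)f_2(y)\}\subset\PP^5$, invariant under the involution $i$ exchanging $x$ and $y$. A Shioda--style rational map from $\Sigma\times\Sigma$ to $W$, sending $([u:x],[u^\prime:x^\prime])$ to $[u^\prime x:ux^\prime]$, induces a correspondence $\Gamma\in A^4(X\times X\times\wt{W})$ such that $\Gamma_\ast$ is injective on $A_0^{hom}(X)\otimes A_0^{hom}(X)$ and sends the antisymmetric cycles $a\times a^\prime-a^\prime\times a$ into the eigenspace $A_0(\wt{W})^-$. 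The proposition is thereby reduced to the vanishing of $A_0(\wt{W})^-$, which is proved by sweeping $W$ by the $i$--invariant Calabi--Yau $3$--folds $W_\alpha=\{f_1(x)=\alpha f_2(y),\ f_1(y)=\alpha f_2(x)\}$ --- this is where the factorization of the sextic as $f_1f_2$ is essential --- and showing $A_0(Z)^-=0$ for such $3$--folds by the explicit method of \cite{V12}. So the special geometry enters not through an elliptic fibration on $X$ or the relation $\sigma_\ast=-\hbox{id}$, but through the auxiliary fourfold $W$ and its covering family of invariant Calabi--Yau $3$--folds; nothing in your sketch plays this role.
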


\begin{proof} This is \cite[Theorem 3.4]{V9} (cf. also \cite[Section 4.3.5.2]{Vo}, \cite[Section 3]{V1}). Because we will use essentially the same argument in proposition \ref{reducible2} below, 
we briefly review Voisin's proof.
Let 
  \[f_1(x) ,\  f_2(x)\] 
  denote the equations of the two plane cubics, where $x=[x_0:x_1:x_2]\in\PP^2$. Let $\Sigma$ be the surface defined by
  \[ \Sigma= \bigl\{ [u:x_0:x_1:x_2]\in\PP^3\ \vert\    u^6=f_1(x)f_2(x)\bigr\}\ \ \subset \PP^3\ .\]
  There is a degree $3$ covering 
  \[ \begin{split} \psi\colon\ \ \Sigma\ &\to\ X\ ,\\  (u,x)\ &\mapsto\  (u^3,x)\\
       \end{split}\]
  (this corresponds to the quotient map $\PP^3\to \PP(1,1,1,3)$, since $X$ can be seen as the hypersurface in weighted projective space $\PP(1,1,1,3)$ given by $v^2=f_1(x)f_2(x)$).
  Let $W\subset\PP^5$ be the sextic fourfold defined by
  \[ f_1(x)f_2(x) - f_1(y)f_2(y)=0\ ,\]
  where $[x_0:x_1:x_2:y_0:y_1:y_2]$ are homogeneous coordinates for $\PP^5$.
  Let $\wt{W}\to W$ denote a desingularization. The fourfold $W$ is obviously invariant under the natural involution
    \[ \begin{split} i\colon\ \ \PP^5\ &\to\ \PP^5\ ,\\
        [x:y]\ &\mapsto\  [y:x]\ ;\\
        \end{split}\]
    likewise, $\wt{W}$ is $\wt{i}$--invariant, where $\wt{i}$ is the induced involution.
    
    There exists a (Shioda--style \cite{Shi}) rational map
    \[  \begin{split}\phi\colon\ \ \Sigma\times \Sigma\ &\dashrightarrow\ W\ ,\\
          \bigl( [u:x],[u^\prime:x^\prime]\bigr)\ &\mapsto\ [ u^\prime x: u x^\prime]\ ;\\
          \end{split} \]
          resolving indeterminacies one obtains a morphism 
     \[ \widetilde{\phi}\colon\ \ \widetilde{\Sigma\times \Sigma}\ \to\ \widetilde{W}\ .\]
  We now have defined morphisms
    \[  \begin{array}[c]{ccc}
                 \wt{\Sigma\times \Sigma}& \xrightarrow{\wt{\phi}}& \wt{W}\\
                   {\widetilde{\psi}\times\widetilde{\psi}}\ \ \ \downarrow\ \ \ \ \ \ \ \ \ \ \ \ \ \ \ \ &&\\
                   X\times X&&\\
                 \end{array}\]
        This induces a correspondence
        \[ \Gamma\in A^4(X\times X\times \widetilde{W})\ ,\]
        with action
        \[   \Gamma_\ast= \widetilde{\phi}_\ast (\wt{\psi}\times\wt{\psi})^\ast\colon\ \ A_i(X\times X)\ \to\ A_i(\wt{W})\ .\]       
   Analyzing the action of $\Gamma$, one directly checks that
     \[  \Gamma_\ast\colon\ \ \Bigl(  A_0^{hom}(X) \otimes A_0^{hom}(X) \Bigr)\ \to\ A_0(\wt{W}) \]
   is injective, and that 
      \[   \Gamma_\ast \Bigl ( a\times a^\prime - a^\prime\times a \Bigr)  \ \subset\ A_0(\wt{W})^{-} \ ,\]
    for any $a,a^\prime\in A_0^{hom}(X)$, where $A_0(\wt{W})^-$ denotes the $-1$--eigenspace for the action of $\wt{i}$ \cite[Lemma 3.4.1]{V9} (cf. also \cite[Lemma 3.5]{V1} for a slight variant, where a different involution on $\wt{W}$ is used).     
                      
  It remains to prove that the eigenspace $A_0(\wt{W})^-$ is $0$. To see this, one remarks that $W$ is covered by the family of (Calabi--Yau) $3$--folds $W_\alpha$, where for each $\alpha\in\C$, one defines
    \[  W_\alpha:=\bigl\{  [x:y]\in \PP^5\ \vert\ f_1(x)=\alpha f_2(y),\ f_1(y)=\alpha f_2(x)\bigr\}\ .\]
    Each $W_\alpha$ is $i$--invariant, and the general $W_\alpha$ is smooth. As each $0$--cycle on $W$ can be supported on finitely many smooth $W_\alpha$'s, the vanishing of the eigenspace $A_0(\wt{W})^-$ follows from the following result:
    
  \begin{proposition} Let $Z\subset\PP^5$ be a $3$--fold defined by two $i$--invariant cubic equations. Then $A_0(Z)^-=0$.
  \end{proposition}
  
 \begin{proof} This can be proven "by hand" using the method of \cite{V12}. 
 \end{proof}

\end{proof}

\begin{proposition}\label{reducible2} Let $X$ be a desingularization of the double cover of $\PP^2$ branched along the union of an irreducible quartic and an irreducible quadric. Then conjecture \ref{inv2} holds for $X$.
\end{proposition}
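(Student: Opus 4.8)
The plan is to run Voisin's argument from Proposition \ref{reducible} essentially verbatim, replacing the product of two cubics by the product $g\cdot q$ of the quartic $g$ and the quadric $q$ cutting out the branch sextic, and then to isolate the one place where the quartic/quadric asymmetry forces genuinely new work. First I would reproduce the auxiliary geometry. Since $\deg(g\cdot q)=6$, the surface $\Sigma=\{[u:x]\in\PP^3\mid u^6=g(x)q(x)\}$ carries a degree $3$ cover $\psi\colon\Sigma\to X$, $(u,x)\mapsto(u^3,x)$, and the Shioda-style map $\phi([u:x],[u':x'])=[u'x:ux']$ lands in the sextic fourfold
\[ W=\bigl\{[x:y]\in\PP^5\mid g(x)q(x)=g(y)q(y)\bigr\}, \]
because on $\Sigma\times\Sigma$ one has $g(u'x)q(u'x)=(u')^6g(x)q(x)=(u')^6u^6$ and $g(ux')q(ux')=u^6(u')^6$, which agree. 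Resolving indeterminacies and singularities gives $\wt\phi\colon\wt{\Sigma\times\Sigma}\to\wt W$ and a correspondence $\Gamma\in A^4(X\times X\times\wt W)$ with $\Gamma_\ast=\wt\phi_\ast(\wt\psi\times\wt\psi)^\ast$. The swap $\iota$ on $X\times X$ corresponds to the swap $i\colon[x:y]\mapsto[y:x]$ on $W$, so exactly as in \cite[Lemma 3.4.1]{V9} the map $\Gamma_\ast\colon A_0^{hom}(X)\otimes A_0^{hom}(X)\to A_0(\wt W)$ is injective and sends $a\times a'-a'\times a$ into the $(-1)$-eigenspace $A_0(\wt W)^-$. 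Thus the statement reduces, as before, to the vanishing $A_0(\wt W)^-=0$.

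The natural way to attack this is again to cover $W$ by Calabi--Yau threefolds. Writing $\lambda=g(x)/g(y)=q(y)/q(x)$ on $W$, the fibres
\[ W_\lambda=\bigl\{q(x)=\lambda q(y),\ g(y)=\lambda g(x)\bigr\} \]
are complete intersections of a quadric and a quartic, hence (as $2+4=6$) Calabi--Yau threefolds, and they sweep out $W$. I would then want the exact analogue of the cubic input of Proposition \ref{reducible}: for a suitable $i$-stable Calabi--Yau threefold $Z\subset\PP^5$ cut out by a quadric and a quartic one has $A_0(Z)^-=0$, proved by the method of \cite{V12} after checking that $i$ acts as $+1$ on $H^{3,0}(Z)$. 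Granting such a statement for an $i$-\emph{invariant} covering family, decomposing an arbitrary $0$-cycle over finitely many smooth members would give $A_0(\wt W)^-=0$ and finish the proof.

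The hard part is precisely this last input, and here the quartic/quadric case diverges from the symmetric two-cubics case. In Voisin's situation both branch curves have the same degree, the ratio $f_1(x)/f_2(y)$ equals $f_1(y)/f_2(x)$ on $W$ and is therefore $i$-invariant, so \emph{every} member $W_\alpha$ is itself $i$-invariant and the fibrewise vanishings $A_0(W_\alpha)^-=0$ assemble to $A_0(\wt W)^-=0$. With a quadric and a quartic the swap $i$ preserves degrees and cannot exchange the two equations; concretely it sends $W_\lambda$ to $W_{1/\lambda}$, so the generic fibre is not $i$-invariant and the clean fibrewise argument breaks down — the classes $z_\lambda-i_\ast z_\lambda$ are anti-invariant, so the naive family $\{W_\lambda\}$ does not by itself force the vanishing. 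The real work is therefore to produce a genuinely $i$-invariant covering of $W$ by Calabi--Yau $(2,4)$-complete intersections. I would try to exploit the identity $g(x)q(x)-g(y)q(y)=g(x)\,(q(x)-q(y))+q(y)\,(g(x)-g(y))$: modulo an $i$-invariant quadric $Q$ the sextic $g(x)q(x)-g(y)q(y)$ must split as (eigen-quartic)$\cdot$(eigen-quadric), which for $Q=q(x)-q(y)$ reduces to $q(x)\,(g(x)-g(y))$ and yields the $i$-invariant member $\{q(x)=q(y),\ g(x)=g(y)\}$. The crux is to show that this splitting persists in a positive-dimensional family of $i$-invariant quadrics $Q$, so that the resulting $i$-invariant threefolds cover $W$; establishing that such an $i$-equivariant Calabi--Yau covering exists, and that $i$ acts trivially on its holomorphic $3$-forms so that \cite{V12} applies, is the step I expect to carry the entire difficulty of Proposition \ref{reducible2}.
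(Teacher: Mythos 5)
You set up the geometry exactly as the paper does (the triple cover $\psi$, the Shioda-style map $\phi$, the correspondence $\Gamma$, its injectivity on $A_0^{hom}(X)\otimes A_0^{hom}(X)$, and the anti-invariance of $\Gamma_\ast(a\times a'-a'\times a)$ under the swap $i$), and your diagnosis of the new difficulty is exactly right: because the quartic and the quadric have different degrees, the swap sends the member $W_\lambda$ of the natural covering family to $W_{1/\lambda}$, so the fibrewise vanishing argument with respect to $i$ collapses. But your proposed repair --- manufacturing a positive-dimensional family of $i$-invariant $(2,4)$ complete intersections covering $W$ by splitting the sextic $g(x)q(x)-g(y)q(y)$ modulo $i$-invariant quadrics --- is a genuine gap, not merely an unfinished computation. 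The identity you exploit produces exactly one $i$-invariant threefold, namely $\{q(x)=q(y),\ g(x)=g(y)\}$, which cannot cover the fourfold $W$; for a general $i$-invariant quadric $Q$ there is no reason the sextic should factor modulo $Q$ as a quartic times a quadric, and you give no mechanism forcing this to happen in a family. Since you yourself locate ``the entire difficulty'' in this step, the proof is missing its crux.

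The paper resolves the asymmetry in the opposite way: it keeps the family and changes the involution. By \cite[Lemma 3.5]{V1}, the classes $\Gamma_\ast(a\times a'-a'\times a)$ are \emph{invariant} under the involution $\wt{j}$ induced by $j\colon[x:y]\mapsto[y:-x]$; combining this with the anti-invariance under the swap $i$ that you already have, and the identity $\tau=i\circ j$ where $\tau\colon[x:y]\mapsto[x:-y]$, one concludes that these classes are \emph{anti-invariant} under $\wt{\tau}$. The key point is that $\tau$ does not interchange $x$ and $y$, so every member $W_\alpha=\{f_1(x)=\alpha f_1(y),\ f_2(y)=\alpha f_2(x)\}$ of the covering family is $\tau$-invariant (both equations being of even degree), and the fibrewise statement needed is now proposition \ref{ok}: for a smooth $\tau$-invariant $(2,4)$ complete intersection $Z$, the anti-invariant part $A_0(Z)^-$ for $\tau$ vanishes, which follows from \cite{V12} and Voisin's spreading-out method because $\tau$ acts as the identity on $H^{3,0}(Z)$. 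These fibrewise vanishings assemble to $A_0(\wt{W})^-=0$ for $\wt{\tau}$, which, by the anti-invariance established above and the injectivity of $\Gamma_\ast$, proves the proposition. So the missing idea in your write-up is precisely this change of involution from $i$ to $\tau=i\circ j$; once you import \cite[Lemma 3.5]{V1}, the rest of your argument goes through with $\tau$ in place of $i$.
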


\begin{proof} This is similar to the above. Let 
  \[ f_1(x),\ f_2(x)\]
  be equations for the quartic resp. quadric in the branch locus, where $x=[x_0:x_1:x_2]$. 
  Let $W$ be the fourfold defined by
   \[   f_1(x)f_2(x) - f_1(y)f_2(y)=0\ .\]
 As $f_1f_2$ is of even degree, $W$ is invariant under the involution
   \[ \begin{split}  \tau\colon\ \ W\ &\to\ W\ ,\\ 
                     [x:y]\ &\mapsto\ [x:-y]\ .\\
                     \end{split}\]
  
  We let $\wt{W}\to W$ denote a resolution of singularities, and $\wt{\tau}$ the induced involution.   
 As above, there is a correspondence 
    \[ \Gamma \in A^4(X\times X\times \wt{W})\ ,\]
    inducing an injection
     \[  \Gamma_\ast\colon\ \ \Bigl(  A_0^{hom}(X) \otimes A_0^{hom}(X) \Bigr)\ \to\ A_0(\wt{W})\ . \]  
   We proceed to check that  
    \[   \Gamma_\ast \Bigl ( a\times a^\prime - a^\prime\times a \Bigr)  \ \subset\ A_0(\wt{W})^{-} \ ,\]
    for any $a,a^\prime\in A_0^{hom}(X)$, where now $A_0(\wt{W})^-$ denotes the $-1$--eigenspace for the action of $\wt{\tau}$.  To see this, note that Voisin \cite[Lemma 3.5]{V1} proves that
    \[  \Gamma_\ast      \Bigl ( a\times a^\prime - a^\prime\times a \Bigr)  \ \subset\ A_0(\wt{W})  \]
    is invariant under the involution $\wt{j}$ induced by
    \[ \begin{split} j\colon\ \ W\ &\to\ W\ ,\\
                                      [x:y]\ &\mapsto [y:-x]\\
                                    \end{split}\]
     (this involution $j$ is denoted $i$ in loc. cit.).   
     Note that we also have, as above in the proof of proposition \ref{reducible}, that
      \[  \Gamma_\ast      \Bigl ( a\times a^\prime - a^\prime\times a \Bigr)  \ \subset\ A_0(\wt{W})
        \]
      is anti--invariant under the involution $i$ exchanging $x$ and $y$. Since 
        \[  \tau=i\circ j\ ,\]
        it follows that 
        \[  \Gamma_\ast      \Bigl ( a\times a^\prime - a^\prime\times a \Bigr)  \ \subset\ A_0(\wt{W})  \]   
     is anti--invariant under $\wt{\tau}$, as claimed.
     
   It only remains to prove that $A_0(\wt{W})^-$, the anti--invariant part under $\wt{\tau}$, vanishes.
   To this end, we consider a family of (Calabi--Yau) $3$--folds $W_\alpha$ covering $W$, defined as
     \[   W_\alpha:=\bigl\{  [x:y]\in \PP^5\ \vert\ f_1(x)=\alpha f_1(y),\ f_2(y)=\alpha f_2(x)\bigr\}\ .\]
    Each $W_\alpha$ is $\tau$--invariant (since $f_1, f_2$ are of even degree), and the general $W_\alpha$ is smooth. As each $0$--cycle on $W$ can be supported on finitely many smooth $W_\alpha$'s, the vanishing of the eigenspace $A_0(\wt{W})^-$ now follows from the following result:
          
     \begin{proposition}\label{ok}  Let $Z\subset\PP^5$ be a smooth $3$--fold defined by two $\tau$--invariant equations of degree $2$ and $4$. Then $A_0(Z)^-=0$.
  \end{proposition}
  
 \begin{proof} Note that $Z$ is Calabi--Yau, and the involution $\tau$ acts as the identity on $H^{3,0}(Z)$, i.e. 
   \[ H^3(Z)^-\ \subset\  F^1 H^3(Z)\ .\]
 One invokes \cite[Proposition 2.1]{V12} to conclude that one has moreover
   \[  H^3(Z)^-\ \subset\ N^1 H^3(Z)\ ;\]
   what's more, $H^3(Z)^-$ is ``parametrized by algebraic cycles'' in the sense of \cite{V1}.
   Now one can apply the ``spreading out'' method of Voisin's \cite{V0}, \cite{V1} to the family of all smooth $\tau$--invariant complete intersections of multidegree $(2,4)$. Some care is needed because one does not have a complete linear system; this problem can be overcome as in \cite[Theorem 3.3]{V1}.
   
 Alternatively, one could prove proposition \ref{ok} ``by hand'' along the lines of \cite{V12}.  
  \end{proof}

 \end{proof}

\begin{proposition}\label{6lines} Let $X$ be a desingularization of the double cover of $\PP^2$ branched along $6$ lines in general position. Then conjecture \ref{inv2} is true for $X$.
\end{proposition}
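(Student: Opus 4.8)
The plan is to imitate Voisin's argument from proposition \ref{reducible} essentially verbatim, the one genuinely new point being that the branch sextic now factors into two \emph{reducible} cubics. Write the branch locus as $f=\ell_1\cdots\ell_6$, the product of the six linear forms, and note that the whole construction of proposition \ref{reducible} --- the cover $\Sigma=\{u^6=f(x)\}\subset\PP^3$ with its degree $3$ map $\psi\colon\Sigma\to X$, the sextic fourfold $W=\{f(x)=f(y)\}\subset\PP^5$ with the involution $i\colon[x:y]\mapsto[y:x]$, and the Shioda--style map $\phi\colon\Sigma\times\Sigma\dashrightarrow W$, $([u:x],[u':x'])\mapsto[u'x:ux']$ --- depends only on $f$ being a sextic and not on any factorization into cubics. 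Indeed $f(u'x)=(u')^6f(x)=(u')^6u^6=u^6f(x')=f(ux')$, so $\phi$ lands in $W$ for every sextic $f$. Resolving indeterminacies and singularities yields, exactly as in loc.\ cit., a correspondence $\Gamma\in A^4(X\times X\times\wt W)$ whose action is injective on $A_0^{hom}(X)\otimes A_0^{hom}(X)$ and satisfies
\[ \Gamma_\ast\bigl(a\times a'-a'\times a\bigr)\ \subset\ A_0(\wt W)^-\qquad(a,a'\in A_0^{hom}(X)), \]
where $A_0(\wt W)^-$ is the $(-1)$--eigenspace for $\wt i$; this is \cite[Lemma 3.4.1]{V9}, which uses nothing about the sextic beyond its degree (the sign being the natural one, since swapping the two factors of $\Sigma\times\Sigma$ corresponds under $\phi$ to $i$ on $W$).

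It then suffices to prove $A_0(\wt W)^-=0$, and here I would use the grouping of the six lines into two triangles. Set $f_1=\ell_1\ell_2\ell_3$ and $f_2=\ell_4\ell_5\ell_6$, so $f=f_1f_2$, and cover $W$ by the family of threefolds
\[ W_\alpha=\bigl\{[x:y]\in\PP^5\ \vert\ f_1(x)=\alpha f_2(y),\ f_1(y)=\alpha f_2(x)\bigr\}. \]
Each $W_\alpha$ is a complete intersection of two cubics in $\PP^5$, hence Calabi--Yau, and is $i$--invariant, the involution $i$ interchanging its two defining equations. Every point $[x:y]\in W$ with $f_2(x)f_2(y)\ne0$ lies on exactly one $W_\alpha$, namely $\alpha=f_1(x)/f_2(y)=f_1(y)/f_2(x)$, the two expressions agreeing precisely because $f_1f_2(x)=f_1f_2(y)$ on $W$. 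Since every $0$--cycle on $\wt W$ can be supported on finitely many of the $W_\alpha$, the vanishing $A_0(\wt W)^-=0$ will follow once $A_0(W_\alpha)^-=0$ for the general member.

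This last vanishing is exactly the unnamed proposition proved at the end of proposition \ref{reducible}: for a threefold $Z\subset\PP^5$ cut out by two cubics interchanged by $i$ one has $A_0(Z)^-=0$. The proof proceeds by the coniveau/spreading--out method of \cite{V12}, \cite{V1}: one checks that the holomorphic $3$--form of $Z$ is $i$--\emph{invariant} --- the key point being that $i$ interchanges the two cubics, so the $-1$ coming from the coordinate swap on $\PP^5$ is offset by the transposition sign of the iterated Poincar\'e residue --- so that $H^3(Z)^-\subset F^1$ has coniveau $\ge1$ and is parametrized by algebraic cycles, and spreading out over the family of all such complete intersections forces $A_0(Z)^-=0$. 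Crucially, none of this uses irreducibility of the cubics, so the statement applies verbatim to $f_1=\ell_1\ell_2\ell_3$ and $f_2=\ell_4\ell_5\ell_6$. Feeding $A_0(W_\alpha)^-=0$ back up the chain gives $\Gamma_\ast(a\times a'-a'\times a)=0$, whence by injectivity $z\times z'=z'\times z$ in $A^4(X\times X)$, i.e.\ conjecture \ref{inv2} for $X$.

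The step I expect to be the main obstacle is the smoothness of the members $W_\alpha$: because $f_1$ and $f_2$ are triangles rather than smooth cubics, the complete intersections could acquire extra singularities, whereas the Hodge--theoretic input above requires a smooth (or only mildly singular) general member. Inspecting the Jacobian, the potential singular points are forced into very special positions relative to the two triangles of lines --- e.g.\ with $x$ at a vertex of one triangle --- and the general--position hypothesis on the six lines (no three concurrent, the two triangles sharing no vertex) together with genericity of $\alpha$ should exclude them, so that the general $W_\alpha$ is a smooth Calabi--Yau complete intersection and Voisin's argument applies unchanged. Should isolated singularities nevertheless survive, one can fall back on the ``by hand'' argument along the lines of \cite{V12} invoked for the analogous statement in proposition \ref{reducible}.
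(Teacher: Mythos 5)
Your proposal is correct in outline, but it is \emph{not} the paper's proof: you have carried out the ``direct'' argument that the paper explicitly flags as probably feasible (``While this can probably be proven `directly' in the spirit of Voisin's result\dots'') and then deliberately avoids. The paper's actual proof is motivic: it uses Paranjape's theorem \cite{Par} that this $K3$ has an \emph{algebraic} Kuga--Satake correspondence and is dominated by a product of curves (hence has finite--dimensional motive), splits the resulting injection $t_2(X)\to h^{4g-2}(A\times A)$ using semi--simplicity of numerical motives, lifts the splitting to rational equivalence via the nilpotence theorem, and so reduces conjecture \ref{inv2} to proposition \ref{abcod2} about $0$--cycles in $A^{2g}_{(2)}$ of an abelian variety, settled with Bloch's generators of the Beauville filtration and the isogeny $(a,b)\mapsto(a+b,a-b)$. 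Your route instead groups the six lines into two triangles and re-runs proposition \ref{reducible}; the two points where reducibility could matter are exactly the ones you isolate, and both hold up. The injectivity of $\Gamma_\ast$ and the eigenspace containment in \cite[Lemma 3.4.1]{V9} indeed use only the cyclic--cover structure ($\Sigma\to X\to\PP^2$, the diagonal $\ZZ/6$--action computing the fibres of $\phi$, and $A_0^{hom}(\PP^2)=0$), not the factorization of the sextic --- consistent with the paper's own reuse of these facts for quartic $+$ quadric in proposition \ref{reducible2}. Your smoothness analysis of $W_\alpha$ is also on target: a singular point forces either $x,y$ at vertices of opposite triangles, or $x,y$ at singular points of members of the pencil $\langle f_1,f_2\rangle$, and each of these finitely many configurations pins $\alpha$ to one of finitely many values, so general $\alpha$ works once no three lines are concurrent. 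Finally, the unnamed proposition closing proposition \ref{reducible} is stated in the paper with no irreducibility hypothesis, so citing it for triangles is legitimate at the paper's own level of rigor (its proof is in any case only sketched there, via the method of \cite{V12}). The trade--off: your argument is more elementary and purely geometric, with no Kuga--Satake or motivic input, but it rests on a smoothness verification and on the only--sketched vanishing $A_0(Z)^-=0$; the paper's argument is heavier but more robust, and --- as the remark following its proof notes --- it immediately generalizes to any $K3$ with finite--dimensional motive and algebraic Kuga--Satake embedding (Nori's quartic, van Geemen's elliptic family), and it is what makes proposition \ref{pic16} work, which your construction by itself does not give.
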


\begin{proof} While this can probably be proven ``directly'' in the spirit of Voisin's result (proposition \ref{reducible}), we prefer to give a somewhat more ``fancy'' proof. This proof hinges on the fact that the Kuga--Satake construction for $X$ is algebraic \cite{Par}. More precisely, according to Paranjape \cite{Par} there exist an abelian variety $A$ of dimension $g$ and a correspondence $\Gamma^\prime\in A^2(X\times A\times A)$ such that
   \[   (\Gamma^\prime)_\ast\colon\ \ T_X\ \to\ H^2(A\times A)\]
   is an injection. It follows that there is an injection
   \[  \Gamma^\prime\colon t_2(X)\ \to\  h^2(A\times A)\ \ \hbox{in}\ {\mathcal M}_{num}\ ,\]
   where $t_2(X)$ is the transcendental motive of $X$ in the sense of \cite{KMP}, and ${\mathcal M}_{num}$ is the category of motives modulo numerical equivalence. Composing with some Lefschetz operator, one also gets an injection
   \[ \Gamma\colon\ \ t_2(X)\ \to\ h^{4g-2}(A\times A)\ \ \hbox{in}\ {\mathcal M}_{num}\ \]
   (here $\Gamma$ is the composition $L^{2g-2}\circ \Gamma^\prime$, where $L$ is an ample line bundle on $A\times A$).

   The category ${\mathcal M}_{num}$ being semi--simple \cite{J1}, this is a split injection, i.e. there exists a correspondence $\Psi\in A^{2}(A\times A\times X)$ such that
  \[   \Psi\circ \Gamma=\hbox{id}\colon\ \ t_2(X)\ \to\ t_2(X)\ \ \hbox{in}\ {\mathcal M}_{num}\ .\]
  But the motive $t_2(X)$ is finite--dimensional (it is a direct summand of $h(X)$, which is finite--dimensional since $X$ is dominated by a product of curves \cite{Par}). This implies that there exists $N\in\NN$ such that
    \[   \bigl( \Delta - \Psi\circ \Gamma \bigr)^{\circ N}=0\colon\ \ t_2(X)\ \to\ t_2(X)\ \ \hbox{in}\ {\mathcal M}_{rat}\ ,\]
    and hence that
    \[  \Gamma_\ast\colon\ \ A^2_{hom}(X)=A^2_{AJ}(X)= A^2_{}(t_2(X))\ \to\ A^{2g}_{AJ}(A\times A)\]
    is injective. 
    We note that, by construction, the action of $\Gamma$ on Chow groups factors as
    \[  \Gamma_\ast\colon\ \ A^2_{AJ}(X) \xrightarrow{ \Gamma^\prime} A^2(A\times A) \xrightarrow{L^{2g-2}} A^{2g}(A\times A)\ .\]
    Let $A^\ast_{(\ast)}()$ denote Beauville's filtration on Chow groups of abelian varieties \cite{Beau}. It follows that
    \[ \Gamma_\ast \bigl( A^2_{AJ}(X)\bigr)\ \subset\ \bigoplus_{j\le 2} A^{2g}_{(j)}(A\times A)\ ,\]
    as the Lefschetz operator preserves Beauville's filtration \cite{Kun}.
    On the other hand, 
    \[ \Gamma_\ast \bigl( A^2_{AJ}(X)\bigr)\ \subset\  A^{2g}_{AJ}(A\times A)= \bigoplus_{j\ge 2} A^{2g}_{(j)}(A\times A)\ .\]   
    The conclusion is that there is an injection
    \[ \Gamma_\ast\colon\ \ A^2_{AJ}(X)\ \to\ A^{2g}_{(2)}(A\times A)\ .\]

    The same argument gives also that
    \[  \Gamma\times\Gamma\colon\ \ \ima \Bigl( A^2_{hom}(X)\otimes A^2_{hom}(X)\ \to\ A^4(X\times X)\Bigr)\subset A^{4}(t_2(X)\otimes t_2(X))\ \to\ A^{4g}(A^4)\]
    is injective. It now suffices to prove a statement for the abelian variety $B=A\times A$:
    
    \begin{proposition}\label{abcod2} Let $B$ be an abelian variety of dimension $2g$. Let
      \[ a, a^\prime\ \ \in A^{2g}_{(2)}(B)\]
      be $2$ $0$--cycles. Then
      \[   a\times a^\prime - a^\prime\times a=0\ \ \hbox{in}\ A^{4g}(B\times B)\ .\]
      \end{proposition}
      
      \begin{proof} The group $A^{2g}_{(2)}(B)$ is generated by products of divisors
      \[   D_1\cdot D_2\cdot\ldots\cdot D_{2g}\ \ \in A^{2g}(B)\ ,\]
      with $2$ of the $D_j$ in $A^1_{(1)}(B)=\pic^0(B)$, and the remaining $2g-2$ $D_j$ in $A^1_{(0)}(B)$ \cite{Bl2}.
     As in \cite[Example 4.40]{Vo}, we consider the map
     \[ \sigma\colon B\times B\to B\times B,\ \ (a,b)\mapsto (a+b, a-b)\ .\]
     This is an isogeny, and one can check it induces a homothety on $A^\ast(B\times B)$. But on the other hand,
     \[  \sigma\circ \iota \circ \sigma= 2 (\hbox{id}_B, -\hbox{id}_B)\colon\ \ B\times B\ \to\ B\times B\ .\]
     It thus suffices to note that
     \[  (\hbox{id}_B, -\hbox{id}_B)_\ast \bigl(   D_1\cdot\ldots\cdot D_{2g}\times D_1^\prime\cdot\ldots\cdot D^\prime_{2g}\bigr)   = D_1\cdot\ldots\cdot D_{2g}\times D_1^\prime\cdot\ldots\cdot D^\prime_{2g}
         \ \  \hbox{in}\ A^{4g}(B\times B)\ ,\]
         since there is an even number of divisors $D_j^\prime$ for which $(-\hbox{id}_B)_\ast(D^\prime_j)=-D^\prime_j$ in $A^1B$.
         \end{proof}

         \end{proof}

\begin{remark} Note that the proof of proposition \ref{6lines} actually establishes something more general: if $X$ is a $K3$ surface with finite--dimensional motive, and the Kuga--Satake embedding of $X$ is induced by an algebraic cycle, then conjecture \ref{inv2} is true for $X$. For instance, this also applies to the quartic surface $X$ in $\PP^3$ defined by an equation
  \[  t^4=f(x,y,z)\ ,\]
where it is supposed that $f(x,y,z)=0$ defines a smooth quartic curve in $\PP^2$. (Indeed, the construction in \cite[Example 11.3]{vG} (where this example is attributed to Nori) shows that both hypotheses are fulfilled by $X$: the ``Kuga--Satake Hodge conjecture'' is shown to hold, and it is shown that $X$ is dominated by a product of curves so the motive is finite--dimensional.) Another example satisfying these conditions is \cite[Example 3.11]{vG2}, which is a $9$--dimensional family of elliptic $K3$ surfaces.
\end{remark}

\begin{remark} Improving on the results of this subsection, it would be interesting to consider more generally $K3$ surfaces that are double covers of $\PP^2$ ramified along an irreducible sextic. Voisin \cite{V1} proposes a tentative strategy towards settling conjecture \ref{inv2} for these $K3$ surfaces: applying \cite[Lemma 3.5]{V1} combined with (an improved variant of) \cite[Theorem 0.6]{V1}, it would suffice to prove that for a certain sextic fourfold $Y$ associated to $X$, one has that $F^1H^4(Y)$ is ``parametrized by algebraic cycles of dimension $1$'', in the sense of \cite{V1} (that is, it would suffice to prove a strong form of the generalized Hodge conjecture for $Y$).
 \end{remark}

\subsection{Shioda--Inose structure}

\begin{definition}[\cite{Mo}] For any surface $M$, let $T_M\subset H^2(M,\ZZ)$ denote the transcendental lattice. For $\ell\in\NN$, let $T_M(\ell)$ denote the lattice $T_M$ with intersection form multiplied by $\ell$. A {\em Nikulin involution\/} on a $K3$ surface $X$ is an involution acting as the identity on $H^{0,2}(X)$.

A $K3$ surface $X$ admits a {\em Shioda--Inose structure\/} if there exists a Nikulin involution $i$ on $X$ with rational quotient map 
  \[ \pi\colon\ \ X \dashrightarrow Y\ \]
  where $Y$ is a Kummer surface, and $\pi_\ast$ induces a Hodge isometry $T_X(2)\cong T_Y$.
\end{definition}

\begin{proposition}\label{SI} Let $X$ be a $K3$ surface with a Shioda--Inose structure. Then conjecture \ref{inv2} is true for $X$.
\end{proposition}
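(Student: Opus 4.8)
The plan is to transfer the conjecture from the associated Kummer surface, for which Voisin has already established conjecture \ref{inv2}, by comparing transcendental motives. The guiding observation is that, for any $K3$ surface $X$, conjecture \ref{inv2} depends only on the transcendental motive $t_2(X)$ of \cite{KMP}. Indeed, as recalled in the proof of proposition \ref{6lines}, one has $A^2_{hom}(X)=A^2(t_2(X))$, and the image of $A^2_{hom}(X)\otimes A^2_{hom}(X)$ in $A^4(X\times X)$ lands in $A^4(t_2(X)\otimes t_2(X))$, on which the involution $\iota$ exchanging the two factors of $X\times X$ acts as the symmetry exchanging the two copies of $t_2(X)$. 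Hence conjecture \ref{inv2} for $X$ is equivalent to the assertion that this symmetry acts as the identity on the said image. Since for any isomorphism of Chow motives $\phi\colon t_2(X)\cong t_2(Y)$ the induced isomorphism $\phi\otimes\phi$ is automatically equivariant for the two exchange symmetries (naturality of the symmetry constraint), any such $\phi$ transports the conjecture for $Y$ to the conjecture for $X$. So it suffices to produce a Kummer surface $Y$ with a Chow--motivic isomorphism $t_2(X)\cong t_2(Y)$.

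First I would establish that $X$ has finite--dimensional motive. Let $i$ denote the Nikulin involution furnishing the Shioda--Inose structure. Being symplectic, $i$ acts trivially on the transcendental lattice $T_X$, so the anti--invariant cohomology $H^2(X)^-$ is contained in $NS(X)$; consequently the anti--invariant summand $h(X)^-=(X,\tfrac12(\Delta_X-\Gamma_i),0)$ is a direct summand of the algebraic part $h^2_{alg}(X)$, a sum of Lefschetz motives, hence finite--dimensional. The invariant summand $h(X)^+=(X,\tfrac12(\Delta_X+\Gamma_i),0)$ is, up to Lefschetz summands, the motive of the minimal resolution $Y$ of the quotient $X/i$, and $Y$ is the Kummer surface appearing in the Shioda--Inose structure. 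Since $Y=\mathrm{Km}(A)$ for an abelian surface $A$, and $A$ is rationally dominated by a product of curves, $A$ and hence $Y$ have finite--dimensional motive; therefore $h(X)^+$ does too, and $h(X)=h(X)^+\oplus h(X)^-$ is finite--dimensional.

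Next I would upgrade the Hodge isometry $T_X(2)\cong T_Y$ to an isomorphism of Chow motives $t_2(X)\cong t_2(Y)$. Because $i$ fixes $T_X$, the summand $t_2(X)$ lies in $h(X)^+$, and the rational quotient map $\pi\colon X\dashrightarrow Y$, resolved to a morphism on a blow--up of $X$, induces a correspondence $\Gamma_\pi\in A^2(X\times Y)$ which on cohomology realizes the underlying isomorphism of Hodge structures $T_X\cong T_Y$ (the scaling by $2$ being irrelevant at the level of Hodge structures). Thus $\Gamma_\pi$ defines an isomorphism $t_2(X)\cong t_2(Y)$ in the category of homological motives. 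Both motives being finite--dimensional by the previous step, the nilpotence theorem (theorem \ref{nilp}) promotes this to an isomorphism of Chow motives: a right inverse built from the transpose correspondence differs from the identity of $t_2(X)$ by a numerically trivial, hence nilpotent, self--correspondence, so the relevant composites are invertible. Combining this with the opening observation, the validity of conjecture \ref{inv2} for the Kummer surface $Y$ \cite{V9} passes to $X$. Alternatively, one may reduce directly to the abelian surface $A$ via $t_2(X)\cong t_2(Y)\cong t_2(A)$ and invoke proposition \ref{abcod2} in the case $g=1$.

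The main obstacle is precisely the passage from cohomology to Chow groups, i.e. producing a genuine isomorphism of \emph{Chow} motives $t_2(X)\cong t_2(Y)$ rather than a merely homological one. This rests entirely on finite--dimensionality, and the finite--dimensionality of $X$ in turn relies on that of $Y$; this is why the decomposition of $h(X)$ under the Nikulin involution and the identification $Y=\mathrm{Km}(A)$ are doing the essential work. Care is also needed in the bookkeeping of the Lefschetz summands introduced by the eight nodes of $X/i$ and by the resolution $Y\to X/i$, but these are of algebraic type and do not affect the transcendental comparison.
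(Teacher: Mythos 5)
Your overall architecture (reduce conjecture \ref{inv2} to a statement about the Chow motive $t_2(X)$, then compare with $t_2$ of the Kummer surface) is coherent, and your Steps A and C are correct as stated. The genuine gap is in Step B, and it is fatal as written: you pass from the \emph{cohomological} fact that the Nikulin involution $i$ acts trivially on $T_X$ to the \emph{Chow--motivic} assertions that $h(X)^-$ is a direct summand of $h^2_{alg}(X)$ and that $t_2(X)$ lies in $h(X)^+$. Neither follows. The projectors $\frac12(\Delta_X\pm\Gamma_i)$ have no reason to commute with the refined Chow--K\"unneth projectors $\pi_2^{alg},\pi_2^{tr}$ of \cite{KMP} modulo rational equivalence; the containment $H^2(X)^-\subset NS(X)$ only controls the homological realization. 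Concretely, without further input $h(X)^-$ could contain a ``phantom'' summand (a nonzero motive with vanishing cohomology), which is never finite--dimensional, so finite--dimensionality of $h(X)$ is not established; likewise the identity you implicitly need, $(\Delta_X-\Gamma_i)\circ\pi_2^{tr}=0$ in $A^2(X\times X)$, is a statement about the action of $i$ on Chow groups, not on cohomology. What closes this gap is Voisin's theorem that a symplectic involution of a $K3$ surface acts as the identity on $A^2(X)$ \cite{V11} (combined with a Bloch--Srinivas decomposition and the results of \cite{KMP} on correspondences supported on divisors); you never invoke this, and trying instead to extract it from finite--dimensionality is circular, since finite--dimensionality of $t_2(X)$ is precisely what Step B is supposed to produce.

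You also cannot sidestep this by using finite--dimensionality of the Kummer side alone: from $\phi=\pi_2^{tr}\circ\Gamma_\pi\circ\pi_2^{tr}$ and its transpose one does get, using nilpotence only for $t_2(Y)$, a direct summand of $t_2(X)$ isomorphic to $t_2(Y)$; but the complementary summand is then a phantom motive $M$ whose group $A^2(M)\subset A^2_{hom}(X)$ is uncontrolled, while conjecture \ref{inv2} concerns all of $A^2_{hom}(X)$. For comparison, the paper's proof is a two--line transfer whose only input is \cite{V11}: since $i$ acts as the identity on $A^2(X)$, the quotient map induces an isomorphism $\pi^\ast\colon A^2_{hom}(Y)\xrightarrow{\cong}A^2_{hom}(X)$, and writing $z\times z'=(\pi\times\pi)^\ast(w\times w')$ reduces the statement to the Kummer surface $Y$, where it is Voisin's theorem \cite{V9}. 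Once \cite{V11} is granted, none of the motivic machinery (finite--dimensionality of $h(X)$, upgrading the Hodge isometry to a Chow isomorphism) is needed; conversely, without \cite{V11} your Step B does not go through.
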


\begin{proof} As the Nikulin involution $i$ acts as the identity on $A^2X$ \cite{V11}, there is an isomorphism
  \[ \pi^\ast\colon\ \ A^2_{hom}(Y)\ \xrightarrow{\cong}\ A^2_{hom}(X)\ .\]
  The result now follows from the truth of conjecture \ref{inv2} for the Kummer surface $Y$ \cite{V9}.
  
    \end{proof}
    

\begin{corollary}\label{1920} Let $X$ be a $K3$ surface with Picard number $\ge 19$. Then conjecture \ref{inv2} is true for $X$.
\end{corollary}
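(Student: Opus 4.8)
The plan is to deduce this corollary from Proposition \ref{SI} by exhibiting a Shioda--Inose structure on every $K3$ surface of Picard number $\ge 19$. The first observation is that for a complex $K3$ surface the Picard number satisfies $\rho(X)\le 20$, so the hypothesis $\rho(X)\ge 19$ leaves only the two cases $\rho(X)=19$ and $\rho(X)=20$. In both cases the transcendental lattice $T_X$ has small rank: since $T_X$ carries a weight-two Hodge structure of signature $(2,20-\rho)$, we get $\operatorname{rank} T_X=3$ when $\rho=19$ and $\operatorname{rank} T_X=2$ when $\rho=20$.

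The key input I would invoke is Morrison's criterion \cite{Mo} for the existence of a Shioda--Inose structure: a $K3$ surface $X$ admits such a structure precisely when its transcendental lattice $T_X$ admits a primitive embedding into $U^{\oplus 3}$, where $U$ denotes the hyperbolic plane. I would then check that this lattice-theoretic condition is automatically satisfied in our two cases. Indeed $U^{\oplus 3}$ has rank $6$ and signature $(3,3)$, while $T_X$ has rank $\le 3$ and signature $(2,20-\rho)$ with $20-\rho\le 1$; a transcendental lattice of such small rank and signature admits a primitive embedding into $U^{\oplus 3}$ by the standard existence results for primitive embeddings of even lattices into even unimodular lattices (Nikulin's theory, as used in \cite{Mo}). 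Thus every $K3$ surface with $\rho\in\{19,20\}$ carries a Shioda--Inose structure.

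Having produced the Shioda--Inose structure, the corollary is immediate: Proposition \ref{SI} asserts that conjecture \ref{inv2} holds for any $K3$ surface equipped with a Shioda--Inose structure, and we have just verified that hypothesis for every $X$ with $\rho(X)\ge 19$. I expect the only genuinely substantive step to be the appeal to Morrison's lattice-theoretic existence result; everything else is bookkeeping with ranks and signatures of transcendental lattices. No new cycle-theoretic argument is needed beyond what already went into Proposition \ref{SI}, since the finite--dimensionality of the motive of the relevant $K3$ surfaces (Picard number $19$ or $20$) is known \cite{P} and is in any case routed through the Kummer surface $Y$ appearing in the Shioda--Inose structure.
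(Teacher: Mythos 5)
Your proposal is correct and follows exactly the paper's route: Proposition \ref{SI} plus Morrison's existence result for Shioda--Inose structures in Picard rank $19$ and $20$. The only difference is cosmetic --- the paper simply cites \cite[Corollary 6.4]{Mo}, which is precisely the statement you re-derive by unpacking Morrison's lattice-theoretic criterion (primitive embedding of $T_X$ into $U^{\oplus 3}$) together with Nikulin's embedding theorem.
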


\begin{proof} $X$ has a Shioda--Inose structure \cite[Corollary 6.4]{Mo}.
\end{proof}

\begin{remark} $K3$ surfaces admitting a Shioda--Inose structure are very special: their Picard number is at least $17$. For the case of Picard number $17$, explicit families of $K3$ surfaces with Shioda--Inose structure have been discovered: these are certain elliptic fibrations \cite{Ko}, \cite[4.7]{vGS}, as well as double covers of the plane branched along certain singular sextics \cite[4.5]{vGS}. More elliptic fibrations with a Shioda--Inose structure are given by \cite[Theorem 4.4]{CD}.

Note that a $K3$ surface admitting a Shioda--Inose structure and with Picard number $17$ or $18$ can not be a Kummer surface \cite[Corollary 3.7]{GS}.
\end{remark}

\begin{remark} It seems interesting to study conjecture \ref{inv2} in positive characteristic as well. As a starter, we note that corollary \ref{1920} still holds in positive characteristic, thanks to work of Liedtke \cite{Li}. More precisely, let $X$ be a $K3$ surface over an algebraically closed field of characteristic $\ge 5$. If the Picard number of $X$ is $22$, $X$ is unirational \cite[Theorem 5.3]{Li} so $A^2(X)$ is trivial. The Picard number can not be $21$ \cite[Theorem 2.6]{Li}. If the Picard number is $19$ or $20$, $X$ is dominated by a Kummer surface \cite[Theorem 2.6]{Li}, and the result follows since the result on abelian varieties \cite[Example 4.40]{Vo} still hold in positive characteristic.
\end{remark}

\subsection{Nikulin involutions}

There are many $K3$ surfaces $X$ with a Nikulin involution $i$ that is {\em not\/} a Shioda--Inose structure (e.g., when the quotient $K3$ surface is not a Kummer surface). Sometimes, we are lucky and the quotient $K3$ surface (more precisely, a minimal resolution of $X/i$) is one for which conjecture \ref{inv2} is known. In these cases, it follows that conjecture \ref{inv2} also holds for $X$. We give 2 examples of this phenomenon; one is a family of $K3$s with Picard number $9$, the other family has Picard number $16$.

\begin{proposition}\label{pic9} Let $X$ be a $K3$ surface such that the Neron--Severi group is isomorphic to the lattice $\Lambda_{\wt{4}}$, in the notation of \cite{vGS}. Then conjecture \ref{inv2} is true for $X$.
\end{proposition}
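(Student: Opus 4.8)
The plan is to reduce Conjecture \ref{inv2} for $X$ to its known validity for the quotient $K3$ surface, following verbatim the strategy of Proposition \ref{SI}. The two structural inputs required are that $X$ genuinely carries a Nikulin involution $i$, and that the minimal resolution $Y$ of $X/i$ is a $K3$ surface already covered by one of the earlier propositions.

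First I would extract from van Geemen--Sarti \cite{vGS} the geometry attached to the lattice $\Lambda_{\wt{4}}$: a $K3$ surface with $NS(X)\cong\Lambda_{\wt{4}}$ carries a symplectic (Nikulin) involution $i$, and one can describe the quotient explicitly. Since $i$ is symplectic, $T_X$ and $T_Y$ have the same rank, so $Y$ again has Picard number $9$; I expect that $Y$ is exactly a double cover of $\PP^2$ branched along an irreducible quartic and an irreducible quadric, i.e. a surface of the type treated in Proposition \ref{reducible2}, whose generic member has Picard number $9$. Pinning down this identification from the lattice data is where the real content lies.

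Granting this, the cycle-theoretic part is immediate and identical to Proposition \ref{SI}. Since $i$ is a Nikulin involution it acts as the identity on $H^{2,0}(X)$, hence by Voisin \cite{V11} as the identity on $A^2X$. Writing $\pi\colon X\dashrightarrow Y$ for the degree--$2$ rational quotient map, the relations $\pi_\ast\pi^\ast=2\cdot\hbox{id}$ and $\pi^\ast\pi_\ast=\hbox{id}+i_\ast=2\cdot\hbox{id}$ (valid with the rational coefficients used throughout) show that
\[ \pi^\ast\colon\ \ A^2_{hom}(Y)\ \xrightarrow{\cong}\ A^2_{hom}(X) \]
is an isomorphism. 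Given $z,z^\prime\in A^2_{hom}(X)$, write $z=\pi^\ast w$ and $z^\prime=\pi^\ast w^\prime$; Conjecture \ref{inv2} for $Y$ (with $n=2$, so the relevant sign is $+1$) gives $w\times w^\prime=w^\prime\times w$ in $A^4(Y\times Y)$, and applying the product correspondence $(\pi\times\pi)^\ast$, which satisfies $(\pi\times\pi)^\ast(w\times w^\prime)=\pi^\ast w\times\pi^\ast w^\prime$, yields $z\times z^\prime=z^\prime\times z$ in $A^4(X\times X)$.

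The main obstacle is therefore not in the Chow groups but in the first step: correctly reading off from \cite{vGS} that the Nikulin quotient of a $K3$ with $NS\cong\Lambda_{\wt{4}}$ falls into the list of surfaces for which Conjecture \ref{inv2} is already established (Propositions \ref{reducible}, \ref{reducible2}, \ref{SI} and \ref{6lines}). Should the quotient instead be a double plane with a reducible or non-generic branch locus not directly covered by those statements, a secondary obstacle would be to check that the spreading-out input used in Proposition \ref{reducible2} still applies to $Y$.
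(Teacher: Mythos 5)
Your proposal follows essentially the same route as the paper's proof: van Geemen--Sarti \cite[3.5]{vGS} indeed show that a $K3$ surface with $NS(X)\cong\Lambda_{\wt{4}}$ carries a Nikulin involution whose quotient has minimal resolution a $K3$ surface $Y$ that is a double plane branched along a quartic and a conic, and the paper then concludes exactly as you do, via Voisin's theorem \cite{V11} that the symplectic involution acts trivially on $A^2(X)$, so that $\pi^\ast\colon A^2_{hom}(Y)\to A^2_{hom}(X)$ is an isomorphism and the symmetry relation transports from $Y\times Y$ to $X\times X$. The identification you flag as the ``real content'' is precisely what \cite[3.5]{vGS} supplies, and your pointer to Proposition \ref{reducible2} (quartic plus quadric) is the correct one --- the paper's own proof cites Proposition \ref{reducible} (two cubics) at this step, which is evidently a typo.
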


\begin{proof} The $11$--dimensional family ${\mathcal M}_{\wt{4}}$ of $K3$ surfaces of this type is described explicitly in \cite[3.5]{vGS}. In particular, it is shown in loc. cit. that there exists a Nikulin involution $i$ on $X$ such that a minimal resolution of the quotient $X/i$  is a $K3$ surface $Y$ isomorphic to a double plane with branch locus the union of a quartic and a conic. Conjecture \ref{inv2} is verified for such $Y$ (proposition \ref{reducible}). Since pull--back induces an isomorphism $A^2_{hom}(Y)\cong A^2_{hom}(X)$ \cite{V11}, it follows that conjecture \ref{inv2} holds for $X$.
\end{proof}
 
\begin{proposition}\label{pic16} Let $X$ be a generic $K3$ surface polarized by the lattice $H\oplus E_7\oplus E_7$, in the sense of \cite{CD}. Then conjecture \ref{inv2} is true for $X$.
\end{proposition}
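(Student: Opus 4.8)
The plan is to follow the same template as the proof of Proposition \ref{pic9}: produce a Nikulin involution on $X$ whose quotient is a K3 surface already covered by one of the earlier results, and then transport conjecture \ref{inv2} across the quotient, using the fact that a Nikulin involution acts as the identity on $A^2$.

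First I would extract from Clingher--Doran \cite{CD} a Nikulin involution $i$ on the generic $H\oplus E_7\oplus E_7$--polarized surface $X$, together with an explicit birational model of the quotient. The identification of the resolved quotient $Y$ of $X/i$ is the crux, and it is essentially forced numerically: since a Nikulin involution fixes the transcendental lattice, one has $\mathrm{rank}\,T_Y=\mathrm{rank}\,T_X=22-16=6$, so $Y$ is again a K3 surface of Picard number $16$. Among the surfaces for which conjecture \ref{inv2} has been established in this note, the only family of Picard number $16$ is the double cover of $\PP^2$ branched along $6$ lines in general position (Proposition \ref{6lines}); I would therefore aim to show, using the normal forms of \cite{CD}, that $Y$ is exactly such a $6$--line double plane.

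Granting this identification, conjecture \ref{inv2} holds for $Y$ by Proposition \ref{6lines}. Because $i$ is a Nikulin involution it acts as the identity on $A^2(X)$ by \cite{V11}; combined with the relations $\pi_\ast\pi^\ast=2$ and $\pi^\ast\pi_\ast=\mathrm{id}+i_\ast=2$ on $0$--cycles, this shows that pull--back yields an isomorphism $\pi^\ast\colon A^2_{hom}(Y)\xrightarrow{\cong}A^2_{hom}(X)$, exactly as in the proofs of Propositions \ref{SI} and \ref{pic9}. Applying $(\pi\times\pi)^\ast$ and the projection formula then carries the relation $z\times z^\prime=z^\prime\times z$ on $Y\times Y$ to the desired relation on $X\times X$, establishing conjecture \ref{inv2} for $X$.

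I expect the main obstacle to be the geometric step, namely rigorously pinning down $Y$ as a $6$--line double plane rather than as merely ``some K3 of Picard number $16$.'' The numerical coincidence above strongly suggests the identification, but a complete argument must read off from the explicit description in \cite{CD} both a birational model of $X/i$ and its branch locus, and must verify that for generic $X$ the six lines are genuinely in general position so that Proposition \ref{6lines} applies. A minor secondary point, formal but worth recording, is the compatibility of $\pi^\ast$ with the exchange involution, i.e. that $(\pi\times\pi)^\ast$ intertwines the two exchange actions on $A^4(Y\times Y)$ and $A^4(X\times X)$; this follows from the projection formula once the isomorphism on $A^2_{hom}$ is in place.
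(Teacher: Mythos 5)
Your proposal is correct and follows essentially the same route as the paper: a Nikulin involution from Clingher--Doran whose resolved quotient is a $6$--line double plane, Proposition \ref{6lines} for the quotient, and the isomorphism $A^2_{hom}(Y)\cong A^2_{hom}(X)$ via \cite{V11}. The geometric step you flag as the main obstacle requires no extra work: \cite[Theorem 4.4]{CD} already states outright that the minimal resolution of $X/i$ is a double cover of $\PP^2$ branched along $6$ lines, which is exactly what the paper cites.
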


\begin{proof} According to \cite[Theorem 4.4]{CD}, there is a Nikulin involution $i$ on $X$ such that a minimal resolution of the quotient $X/i$ is a $K3$ surface $Y$ isomorphic to a double cover of the plane branched along $6$ lines. Conjecture \ref{inv2} holds for $Y$ (proposition \ref{6lines}). Since pull--back induces an isomorphism $A^2_{hom}(Y)\cong A^2_{hom}(X)$ \cite{V11}, it follows that conjecture \ref{inv2} holds for $X$.
\end{proof}

\section{Kunev surfaces}

In this section we show that conjecture \ref{inv2} is true for Kunev surfaces. These surfaces form a $12$--dimensional family of surfaces of general type with $p_g=K_X^2=1$. The proof is quite direct, and goes as follows. The bicanonical map of a Kunev surface factors over a $K3$ surface, which is of a special type: it is obtained from a double cover of $\PP^2$ branched along the union of $2$ smooth cubics \cite{Tod}. By chance, for such $K3$ surfaces Voisin has already established the truth of conjecture \ref{inv2} (\cite{V9} or proposition \ref{reducible}). Hence, to prove conjecture \ref{inv2} for the Kunev surface $X$, it only remains to relate $0$-cycles on $X$ and $0$--cycles on the associated $K3$ surface; this can be done using the ``spreading out'' argument of \cite{V0} and \cite{V1}.

\begin{definition}[\cite{Tod}]\label{kunev} A {\em Kunev surface\/} is a smooth projective surface $X$ of general type with $p_g(X)=1$, $K_X^2=1$, such that its unique effective
canonical divisor is a smooth curve, and the morphism given by $\vert 2K_X\vert$ is a Galois covering of $\PP^2$.
\end{definition}    

\begin{remark} Surfaces of general type with $p_g=K_X^2=1$ are studied in \cite{Cat} and \cite{Tod}. In \cite{Cat}, a Kunev surface is called a {\em special\/} surface with $p_g=K^2_X=1$.
\end{remark}

 \begin{proposition}\label{propkunev} Let $X$ be a Kunev surface. Then conjecture \ref{inv2} is true for $X$.  
    \end{proposition}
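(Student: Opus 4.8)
The plan is to use the bicanonical factorization of a Kunev surface to reduce everything to the $K3$ surface treated in proposition \ref{reducible}. A Kunev surface $X$ has $p_g=1$, $q=0$, $K_X^2=1$, and by \cite{Tod} the bicanonical map $\vert 2K_X\vert$ exhibits $X$ as a $(\ZZ/2)^2$--Galois cover of $\PP^2$. First I would record that this cover factors as $X\xrightarrow{q} S\to\PP^2$, where $S$ is the $K3$ surface obtained as the double cover of $\PP^2$ branched along the union of the two cubics associated to $X$, and $q$ is a double cover with covering involution $\sigma$ (I pass to smooth models by resolving the base locus of $\vert 2K_X\vert$; this changes neither $A^2_{hom}$ nor the statement). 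Since $S$ is exactly of the type handled by proposition \ref{reducible}, conjecture \ref{inv2} holds for $S$, i.e.\ $u\times u'=u'\times u$ in $A^4(S\times S)$ for all $u,u'\in A^2_{hom}(S)$.

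Next I would analyze $\sigma$. As $S$ is a $K3$ surface whose holomorphic $2$--form pulls back to a generator of $H^{2,0}(X)$, the involution $\sigma$ acts as $+1$ on $H^{2,0}(X)$. The transcendental Hodge structure $T_X=H^2(t_2(X))$ (notation of \cite{KMP}) is irreducible with $\dim H^{2,0}=1$, so any Hodge isometry fixing $H^{2,0}$ is the identity; hence $\sigma^\ast=\hbox{id}$ on $T_X$, and the self--correspondence $\Delta_X-\Gamma_\sigma$ acts as zero on transcendental cohomology (its anti--invariant part lies in $NS(X)$). On Chow groups, $q_\ast q^\ast=2$ shows that $q^\ast\colon A^2_{hom}(S)\to A^2_{hom}(X)$ is injective with image the $\sigma$--invariant subgroup $A^2_{hom}(X)^{\sigma}$; recall $A^2_{hom}(X)=A^2_{AJ}(X)$ since $q=0$.

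The crux is to show that $q^\ast$ is in fact an isomorphism, equivalently that the anti--invariant part $A^2_{hom}(X)^{-}$ vanishes; this amounts to upgrading the cohomological triviality of $\Delta_X-\Gamma_\sigma$ on $T_X$ to triviality on $0$--cycles. Kunev surfaces are not known to have finite--dimensional motive, so the nilpotence theorem \ref{nilp} is not available; instead I would run Voisin's ``spreading out'' method \cite{V0}, \cite{V1} over the $12$--dimensional family of Kunev surfaces. The relative correspondence $\Delta_{\mathcal X/B}-\Gamma_\sigma$ is fiberwise homologically trivial on the transcendental part, and the relevant piece of $F^1$--cohomology is ``parametrized by algebraic cycles'' in the sense of \cite{V1} --- this is exactly what Voisin's study of the $K3$ double plane underlying proposition \ref{reducible} provides, transported through $q$. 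Spreading out then forces $(\Delta_X-\Gamma_\sigma)_\ast=0$ on $A^2_{hom}(X)$, hence $A^2_{hom}(X)^{-}=0$.

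Granting $A^2_{hom}(X)=q^\ast A^2_{hom}(S)$, the transfer is formal: writing $w=q^\ast u$, $w'=q^\ast u'$ and using that pullback commutes with external products, $w\times w'-w'\times w=(q\times q)^\ast(u\times u'-u'\times u)=0$ by the case of $S$; since every class in $A^2_{hom}(X)$ is of this form, conjecture \ref{inv2} holds for $X$. I expect the main obstacle to be the spreading--out step: one has no complete linear system at one's disposal (precisely the difficulty resolved in \cite[Theorem 3.3]{V1}), and one must ensure the conclusion holds for every Kunev surface rather than only the very general one. The uniform vanishing of the anti--invariant transcendental cohomology across the whole family should be what lets the argument apply member by member, but this is the delicate point requiring care.
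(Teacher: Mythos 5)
Your outer strategy coincides with the paper's: reduce via the degree--$2$ quotient $q\colon X\to S$ to the $K3$ double plane branched along two cubics, quote proposition \ref{reducible} for $S$, and transfer through the pullback on $A^2_{hom}$ (note that your correspondence $\Delta_X-\Gamma_\sigma$ is, up to a factor $2$, the paper's $\Delta-\frac{1}{2}\,{}^t\Gamma_q\circ\Gamma_q$, since ${}^t\Gamma_q\circ\Gamma_q=\Delta_X+\Gamma_\sigma$). The reduction and the final transfer are correct. But the crux --- proving $A^2_{hom}(X)^{-}=0$, equivalently that $q^\ast\colon A^2_{hom}(S)\to A^2_{hom}(X)$ is surjective --- is precisely the paper's proposition \ref{same}, and your proposal does not prove it: you name ``spreading out over the $12$--dimensional family'' and then defer the implementation as the delicate point. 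The idea your write--up is missing, and which carries the whole proof, is the structure theorem of Todorov and Catanese (\cite{Tod}, \cite{Cat}): every surface with $p_g=K_X^2=1$ is a complete intersection of multidegree $(6,6)$ in the weighted projective space $P=\PP(1,2,2,3,3)$, and the Kunev condition says exactly that the two equations are invariant under $[x_0:x_1:\cdots:x_4]\mapsto[-x_0:x_1:\cdots:x_4]$. This is what makes Voisin's machinery actually runnable: it specifies the base $B$ (pairs of weighted sextics with $x_0$ appearing only in even degree); it gives the fibrewise input, namely that $\Delta_{X_b}-\Gamma_\sigma$ is homologous to a cycle supported on $(\hbox{divisor})\times(\hbox{divisor})$ because $(q_b)^\ast(q_b)_\ast=2\,\hbox{id}$ on $H^{2,0}(X_b)$ and $H^1=H^3=0$, so the correspondence acts through $NS(X_b)$; and, most importantly, it permits the compactification of $\widetilde{\XX\times_B\XX}$ inside a projective variety $Q$ fibred in products of projective spaces over $\widetilde{P\times P}$ (lemma \ref{compact}), so that $A^2_{hom}(Q)=0$ and the globally homologically trivial cycle produced by \cite[Proposition 2.7]{V0} and the Leray argument \cite[Lemma 2.12]{V0} becomes rationally trivial. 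Even the bundle statement costs something: one must check that a length--$2$ subscheme imposes independent conditions on the sextics, which the paper reduces, via the degree--$2$ map $P\to\PP(2,2,2,3,3)$, to the very ampleness of $\OO_{\PP(2,2,2,3,3)}(6)$, proved by Delorme's criterion (lemma \ref{delorme}). Without these ingredients the ``spreading out'' is a label, not an argument.

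Two smaller corrections. First, the cohomological input you invoke --- that a piece of $F^1$--cohomology is ``parametrized by algebraic cycles'' in the sense of \cite{V1}, transported from Voisin's analysis of the $K3$ --- is not what this step needs and is not what the paper uses: since $\Delta_X-\Gamma_\sigma$ kills $H^{2,0}$ fibrewise, the required fibrewise statement sits at the level of the Lefschetz $(1,1)$ theorem, and no generalized--Hodge--conjecture--type hypothesis enters (that language belongs to the harder case, mentioned in the paper's remarks, of double planes branched along irreducible sextics). Second, your worry about obtaining the conclusion for \emph{every} Kunev surface, not just the very general one, is handled in the paper simply by choosing the supporting divisor $\Z\subset\XX$ in general position with respect to the given fibre $X_{b_0}$ --- routine once the family is the explicit one above. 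So: right strategy and correct outer steps, but the actual content of the proof (proposition \ref{same} together with lemmas \ref{compact} and \ref{delorme}) is absent from your proposal.
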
    
    
  \begin{proof} 
  
  According to the structural results of \cite{Tod} (or, independently, \cite{Cat}), any surface of general type with $p_g=K_X^2=1$ is a complete intersection of multidegree $(6,6)$ in a weighted projective space $P:=\PP(1,2,2,3,3)$.
  If in addition $X$ is a Kunev surface, then it is proven in \cite{Cat} and \cite{Tod} that the equations defining $X$ are invariant under the involution 
    \[\begin{split} i\colon\ \ &P\ \to\ P\ ,\\
                     & [x_0:x_1:\ldots:x_4]\ \mapsto\ [-x_0:x_1:\ldots:x_4]  \ .
                     \end{split}\]
           The quotient  $Y={X/ i}$ is a $K3$ surface, which is obtained by desingularizing a double cover of $\PP^2$ branched along two smooth cubics. Conjecture \ref{inv2} is true for $Y$ \cite[Theorem 3.4]{V9}. This implies conjecture \ref{inv2} for $X$, provided we can relate $0$--cycles on $X$ to $0$--cycles on $Y$; this is done in proposition \ref{same} below.
            \end{proof}
            
       \begin{proposition}\label{same} Let $X$ be a Kunev surface, and let $p\colon X\to Y$ denote the quotient map to the associated $K3$ surface. Then
       \[ p^\ast\colon\ \ A^2_{hom}(Y)\ \to\ A^2_{hom}(X) \]
       is an isomorphism.
        \end{proposition}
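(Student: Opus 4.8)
The plan is to combine the elementary behaviour of pull--back along a degree--two quotient with a Bloch--type statement for the Kunev involution $i$, the latter obtained by ``spreading out''. Throughout, write $\pi\colon X\to X/i$ for the quotient morphism. Since $i$ fixes the canonical curve and acts as $-\hbox{id}$ at its remaining (isolated) fixed points, the quotient $X/i$ has at worst rational double points, so resolving them to obtain the $K3$ surface $Y$ does not alter $A^2_{hom}$; it is therefore equivalent to work with $\pi$ in place of $p$. Working with $\QQ$--coefficients, the projection formula gives $\pi_\ast\pi^\ast=2\,\hbox{id}$, so $\pi^\ast$ is injective, while $\pi^\ast\pi_\ast=\hbox{id}+i_\ast$ shows that the image of $\pi^\ast$ is exactly the $i$--invariant part $A^2_{hom}(X)^{i}$. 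Thus the proposition is equivalent to the vanishing of the anti--invariant part, i.e. to
\[ i_\ast=\hbox{id}\colon\ \ A^2_{hom}(X)\ \to\ A^2_{hom}(X)\ .\]

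The cohomological shadow of this identity is easy. The involution $i$ acts trivially on $H^{2,0}(X)$ -- this is precisely what makes $Y=X/i$ a $K3$ surface, the canonical form $\omega_X$ vanishing along the ramification curve and descending to the nowhere--vanishing form of $Y$. Since the transcendental part $H^2_{tr}(X)$ is, by definition, the smallest sub--Hodge--structure whose complexification contains $H^{2,0}(X)$, the invariant part $H^2_{tr}(X)^{i^\ast}$ is a sub--Hodge--structure containing $\omega_X$, hence is all of $H^2_{tr}(X)$; that is, $i^\ast=\hbox{id}$ on $H^2_{tr}(X)$. Consequently, if $\pi_2^{tr}$ denotes the transcendental Chow--K\"unneth projector of \cite{KMP}, the correspondence $(\Delta_X-\Gamma_i)\circ\pi_2^{tr}$ is homologically trivial, and $i_\ast$ and $\hbox{id}$ agree on $A^2_{hom}(X)=A^2\bigl(t_2(X)\bigr)$ modulo homological equivalence.

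It remains to upgrade this homological triviality to a statement modulo rational equivalence. In proposition \ref{rho} this was done via the nilpotence theorem (theorem \ref{nilp}), but that route is unavailable here, as a single Kunev surface is not known to have finite--dimensional motive. Instead I would apply the ``spreading out'' technique of Voisin \cite{V0}, \cite{V1}: one works over the $12$--dimensional family of Kunev surfaces, on which $i$ extends to a fibrewise involution and on which the cohomological input $i^\ast=\hbox{id}$ on $H^2_{tr}$ holds uniformly, and deduces that $(\Delta_X-\Gamma_i)_\ast$ kills $A^2_{hom}$ of the very general fibre; a standard specialisation (countability) argument then extends the conclusion to every member of the family. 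This yields $i_\ast=\hbox{id}$ on $A^2_{hom}(X)$, which with the first paragraph gives the desired isomorphism.

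The main obstacle is this last step. The spreading--out argument does not apply off the shelf: exactly as in proposition \ref{reducible2}, one does not have a complete linear system at one's disposal, since the Kunev surfaces are cut out inside $\PP(1,2,2,3,3)$ by $i$--invariant equations, so the family must be treated with the care of \cite[Theorem 3.3]{V1}. One must also check that the ramification and the resolution of the $A_1$--points vary nicely in the family, so that the Hodge--theoretic input is genuinely uniform over the base and the period map is sufficiently non--degenerate for Voisin's infinitesimal argument to bite.
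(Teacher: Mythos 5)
Your overall strategy coincides with the paper's: reduce the statement to $i_\ast=\hbox{id}$ on $A^2_{hom}(X)$ (the paper encodes this in the cycle $\DD=\Delta-\frac{1}{2}{}^t\Gamma_p\circ\Gamma_p$, which acts on $0$--cycles as $\frac{1}{2}(\hbox{id}-i_\ast)$), feed in the cohomological fact that $i$ acts trivially on the transcendental part of $H^2$, and upgrade from homological to rational equivalence by spreading out over the family of $i$--invariant complete intersections in $\PP(1,2,2,3,3)$, precisely because nilpotence/finite--dimensionality is unavailable for a single Kunev surface. But your proposal stops exactly where the paper's real work begins. You correctly identify the obstacle --- the $i$--invariant degree--$(6,6)$ complete intersections do not form a complete linear system --- and then dispose of it by deferring to ``the care of \cite[Theorem 3.3]{V1}'' and to an unspecified non--degeneracy of the period map. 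That is a genuine gap: the decisive content of the paper's proof is a concrete resolution of this obstacle, and it has nothing to do with the infinitesimal machinery of \cite{V1}. The paper observes (lemma \ref{compact}) that the $i$--invariant weighted polynomials of degree $6$ on $P=\PP(1,2,2,3,3)$ are exactly the pullbacks of degree--$6$ polynomials under the degree--$2$ map $\phi\colon P\to P^\prime=\PP(2,2,2,3,3)$, and that $\OO_{P^\prime}(6)$ is very ample by Delorme's criterion (lemma \ref{delorme}). Hence every length--$2$ subscheme imposes two independent conditions on the equations, so the natural compactification $Q$ of $\wt{\XX\times_B\XX}$ is a fibre bundle over $\wt{P\times P}$ with fibres products of linear subspaces, giving $A^2_{hom}(Q)=0$. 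It is this vanishing that converts the globally homologically trivial corrected cycle $\DD_{new}$ into a rationally trivial one; without it (or the unproved Voisin standard conjecture \cite[Conjecture 0.6]{V0}) the spreading--out method does not close, and it is not clear that \cite[Theorem 3.3]{V1} applies off the shelf in this weighted--projective setting.

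A secondary deviation: you conclude for the very general fibre and then invoke ``a standard specialisation (countability) argument''. The paper never passes through the very general fibre: it obtains the identity $\Delta_{X_b}-\frac{1}{2}{}^t\Gamma_p\circ\Gamma_p=(\DD^\prime+\DD^{\prime\prime})\vert_{X_b\times X_b}$ in $A^2(X_b\times X_b)$ for every $b$ at once, and handles an arbitrary $b_0$ simply by choosing the supporting divisor $\Z$ in general position with respect to $X_{b_0}$. Your route could be repaired --- the output of the spreading argument is a cycle--level identity, which specialises by Fulton's theory --- but as written, ``$(\Delta_X-\Gamma_i)_\ast$ kills $A^2_{hom}$ of the very general fibre'' is not by itself a specialisable statement; you would first need to convert it into a decomposition of correspondences (Bloch--Srinivas style) before specialising, a step your proposal does not supply.
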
     
        
        \begin{proof} We use the ``spreading out'' argument of Voisin's \cite{V0}, \cite{V1}, which exploits the fact that the surfaces come in a family. Let
        \[  \pi\colon\ \  \XX\ \to B\]
        denote the family of all smooth complete intersections in $P:=\PP(1,2,2,3,3)$, defined by $2$ equations of weighted degree $6$ where $x_0$ only occurs in even degree. For any $b\in B$, let $X_b$ denote the fibre $\pi^{-1}(b)$. The involution $i$ induces an involution on the total space of the family, which we still denote by $i$.
        This induces a quotient map
        \[  p\colon\ \ \XX\ \to\ \YY:=\XX/i\ ,\]
        where $\YY\to B$ is the family of associated $K3$ surfaces.
        
        Consider now the cycle
        \[  \DD:= \Delta- {1\over 2}{}^t\Gamma_p\circ \Gamma_p \ \ \in A^2(\XX\times_B \XX)\ \]
        (where $\Delta$ denotes the relative diagonal, and $\Gamma_p$ is the graph of $p$).
        This cycle has the property that for any $b\in B$, the restriction
        \[   \DD\vert_{X_b\times X_b}\ \ \in H^4(X_b\times X_b)\]
        is supported on $Z_b\times Z_b$, for some divisor $Z_b\subset X_b$. (Indeed, for any $b\in B$ we have that
          \[  (p_b)_\ast(p_b)^\ast(p_b)_\ast= 2(p_b)_\ast\colon\ \ H^{2,0}(X_b)\ \to\ H^{2,0}(Y_b)\ ,\]
          and hence
          \[  (p_b)^\ast(p_b)_\ast= 2\hbox{id}\colon\ \ H^{2,0}(X_b)\ \to\ H^{2,0}(X_b)\ .)\]          
         
         Using Voisin's ``spreading out'' result \cite[Proposition 2.7]{V0}, it follows there exists a divisor $\Z\subset\XX$ and a cycle $\DD^\prime\in A^2(\XX\times_B \XX)$ supported on $\Z\times_B \Z$, such that
         \[ (\DD-\DD^\prime)\vert_{X_b\times X_b}=0\ \ \hbox{in}\ \  H^4(X_b\times X_b)\ ,\]
         for all $b\in B$.
   Next, an analysis of the Leray spectral sequence as in \cite[Lemma 2.12]{V0} shows that there exists a cycle $\DD^{\prime\prime}$ with support on $\Z\times_B \XX \cup
   \XX\times_B \Z$, such that we have the global homological vanishing
      \[ \DD_{new}:=  \DD-\DD^\prime -\DD^{\prime\prime}=0\ \ \hbox{in}\ \ H^4(\XX\times_B \XX)\ \]
      (here we have enlarged the divisor $\Z\subset\XX$).
     Denoting by $f$ the blow--up of $\XX\times_B \XX$ along the relative diagonal, we also have
     \[  f^\ast(\DD_{new})=0\ \ \hbox{in}\ \ H^4(\widetilde{\XX\times_B \XX})\ .\] 
      
    Let $Q$ be the compactification of $\XX\times_B \XX$ introduced in lemma \ref{compact} below. The variety $Q$ is almost smooth: it is a quotient variety $Q=Q^\prime/G$, where $G$ is a finite group (because $P$ is a quotient variety). This implies there is a good intersection theory with rational coefficients on $Q$ \cite[Example 17.4.10]{F}. 
      Using the truth of the Hodge conjecture for divisors, we find there exists a cycle class
      \[    \overline{\DD}_{new}\ \ \in A^2_{hom}(Q)\]
      restricting to $f^\ast(\DD_{new})$.
      But the cycle $\overline{\DD}_{new}$ is rationally trivial (lemma \ref{compact}), hence so is its restriction to any fibre. This proves proposition \ref{same} for general $b\in B$: indeed, we find an equality
      \[   \Delta_{X_b}-{1\over 2}{}^t \Gamma_p \circ \Gamma_p=   (\DD^\prime+ \DD^{\prime\prime})\vert_{X_b\times X_b}\ \ \hbox{in}\ \ A^2(X_b\times X_b)\ ,\]
      and for general $b\in B$ the right--hand side does not act on $A^2_{hom}(X_b)=A^2_{AJ}(X_b)$.
      
      To get the result for any $b_0\in B$, it suffices to note that in the above construction, the divisor $\Z$ supporting the cycles $\DD^\prime$ and $\DD^{\prime\prime}$ may be chosen in general position with respect to $X_{b_0}$, and then the above argument applies to $X_{b_0}$.
      
      \begin{lemma}\label{compact} Set--up as above. Let
        \[ f\colon\ \ \widetilde{\XX\times_B \XX}\ \to\ \XX\times_B \XX\]
      be the blow--up along the relative diagonal, and let
      \[ \widetilde{P\times P}\ \to\ P\times P\]
      be the blow--up along the diagonal.  
        There exists a projective compactification
        \[   Q\ \supset\ \widetilde{\XX\times_B \XX} \ ,\]
        with the property that $Q$ is a fibre bundle over $\widetilde{P\times P}$, and fibres are products of projective spaces.
         In particular, we have
          \[ A^2_{hom}(Q)=0\ .\]
      \end{lemma}   
      
   \begin{proof} (This is inspired by Voisin's \cite[proof of proposition 2.13]{V0} (cf. also \cite[Lemma 1.3]{V1}, \cite[Lemma 4.32]{Vo}), which treats the slightly different case of the complete family of smooth complete intersections defined by very ample line bundles in an ambient space with trivial Chow groups.) 
 
 A point of $\wt{P\times P}$ is a triple $(x,y,z)$, where $x,y\in P$ and $z$ is a length $2$ subscheme of $P\times P$ with $z=x+y$. 
 Let $\bar{B}\supset B$ denote the product of projective spaces paremetrizing all pairs of (not necessarily smooth) weighted homogeneous polynomials of degree $6$ containing $x_0$ in even degree.
   The quasi--projective variety $\widetilde{\XX\times_B \XX}$ is contained in the projective variety $Q\subset \bar{B}\times \wt{P\times P}$ defined as
     \[  Q=\Bigl\{ \bigl((\sigma_1,\sigma_2), x,y,z\bigr)\in \bar{B}\times\wt{P\times P}\ \vert\ \sigma_1\vert_z=\sigma_2\vert_z=0 \Bigr\}\ \ \subset \ \bar{B}\times \wt{P\times P}\ .\]
    Let $p\colon Q\to \wt{P\times P}$ denote the projection. The fibre of $p$ over $(x,y,z)\in\wt{P\times P}$ is
      \[  p^{-1}(x,y,z)=\Bigl\{  (\sigma_1,\sigma_2)\in \bar{B}\ \vert\ \sigma_1\vert_z=\sigma_2\vert_z=0 \Bigr\}\ .\]
      
     We want to show that any fibre is a product of $2$ codimension $2$ linear subspaces in $\bar{B}$, i.e. that any $z$ imposes $2$ independent conditions on the polynomials $\sigma_j$. To this end, we note that there exists a degree $2$ map
     \[ \phi\colon\ \  P=\PP(1,2,2,3,3)\ \to\ \PP(2,2,2,3,3)=: P^\prime\ ,\] 
     and that the polynomials in $\bar{B}$ correspond to
     \[   \bar{B}^\prime:=\phi^\ast \vert {\mathcal O}_{P^\prime}(6)\vert \times  \phi^\ast \vert {\mathcal O}_{P^\prime}(6)\vert \ .\]
     It follows that the fibre $p^{-1}(x,y,z)$ is isomorphic to the subspace of $\bar{B}^\prime$ of polynomials passing through $\phi(z)$. But ${\mathcal O}_{P^\prime}(6)$ is a very ample line bundle on $P^\prime$ (this is proven in lemma \ref{delorme} below), so this subspace has codimension $2$.
     
     The conclusion about the vanishing of $A^2_{hom}(Q)$ follows from the fact that blow--ups and fibre bundle structures preserve the property of having trivial Chow groups \cite{V0}.    
     \end{proof}  
     
    \begin{lemma}\label{delorme} Let $P^\prime$ be the weighted projective space $\PP(2,2,2,3,3)$. Then the line bundle ${\mathcal O}_{P^\prime}(6)$ is very ample.
    \end{lemma}
    
    \begin{proof} The coherent sheaf ${\mathcal O}_{P^\prime}(6)$ is locally free, because $6$ is a multiple of the ``weights'' $2$ and $3$ \cite{Dol}. To see that this line bundle is very ample, we use the following numerical criterion: 
 
 \begin{proposition}[Delorme \cite{Del}]\label{del} Let $P=\PP(q_0, q_1,\ldots,q_n)$ be a weighted projective space. Let $m$ be the least common multiple of the $q_j$. Suppose every monomial
 \[x_0^{b_0} x_1^{b_1}\cdots x_n^{b_n}\]
 of (weighted) degree $km$ ($k\in \NN^\ast$) is divisible by a monomial of (weighted) degree $m$. Then ${\mathcal O}_{P}(m)$ is very ample.
 \end{proposition}
 
(This is the case $E(x)=0$ of \cite[Proposition 2.3(\rom3)]{Del}.)
 
 We apply proposition \ref{del} to the set--up of lemma \ref{delorme}. A monomial of degree $6k$ is of the form $x^{\underline{b}}=x_0^{b_0}\cdots x_4^{b_4}$ with
   \[ 2(b_0+b_1+b_2) +3(b_3+b_4)=6k\ .\]
   Suppose $b_3+b_4\ge 2$. Then the condition is obviously fulfilled, since we have a degree $6$ monomial $x_3x_4$ (or $x_3^2$ or $x_4^2$) dividing $x^{\underline{b}}$.
   So we may suppose $b_4=0$ and hence also $b_3=0$ (since $b_3=1$ would imply $6k$ is odd). Again, it is easily seen that the condition of the proposition is fulfilled: one can take an appropriate combination of $x_0, x_1, x_2$ to create a degree $6$ monomial dividing $x^{\underline{b}}$.
     \end{proof}   
      \end{proof}

  \begin{remark} There are two possible generalizations of proposition \ref{propkunev} that seem natural:
  
  The first is to try and extend proposition \ref{propkunev} to all surfaces of general type with $p_g=K_X^2=1$. Such surfaces are complete intersections in a weighted projective space \cite{Tod}, \cite{Cat}, so Voisin's method of spreading out cycles \cite{V0}, \cite{V1} applies. The ``only'' two obstacles that need to be circumvented are (1) that one needs the generalized Hodge conjecture for the Hodge structure $\wedge^2 H^2(X)\subset H^4(X\times X)$, and (2) that one needs the Voisin standard conjecture \cite[Conjecture 0.6]{V0} to get a cycle supported on some subvariety inside $X^4$.
  
 The other direction of generalization would be to extend proposition \ref{propkunev} to all {\em Todorov surfaces\/}, i.e. minimal surfaces $X$ of general type with $q=0$ and $p_g=1$
 having an involution $i$ such that $S/i$ is birational to a $K3$ surface and such that the bicanonical map of $X$ is composed with $i$. A Kunev surface is a Todorov surface with $K_X^2=1$. For any Todorov surface $X$, one can prove \cite{Rito} that the minimal resolution of $X/i$ is a $K3$ surface $Y$ obtained from a double plane with branch locus a union of $2$ cubics. As conjecture \ref{inv2} is known for such $Y$ (proposition \ref{reducible}), it ``only'' remains to show that $A^2_{hom}(X)\cong A^2_{hom}(Y)$. For the Kunev surfaces of proposition \ref{propkunev}, this was easy because they are complete intersections in a weighted projective space; for the other Todorov surfaces (i.e., with $K_X^2>1$), perhaps the total space of the family can likewise be exploited ? 
  \end{remark}  
   

\vskip1cm

\begin{acknowledgements} The ideas of this note grew into being during the Strasbourg 2014---2015 groupe de travail based on the monograph \cite{Vo}. Thanks to all the participants of this groupe de travail for a pleasant and stimulating atmosphere. Thanks to Olivier Benoist and Charles Vial for helpful conversations related to this note.
Many thanks to Yasuyo, Kai and Len for providing an environment propitious to work at home in Schiltigheim.
\end{acknowledgements}

\vskip1cm


\begin{thebibliography}{dlPG99}

\bibitem{An} Y. Andr\'e, Motifs de dimension finie (d'apr\`es S.-I. Kimura, P. O'Sullivan,...), S\'eminaire Bourbaki 2003/2004, Ast\'erisque 299 Exp. No. 929, viii, 115---145,




\bibitem{Beau} A. Beauville, Sur l'anneau de Chow d'une vari\'et\'e ab\'elienne, Math. Ann. 273 (1986), 647---651,

\bibitem{Beau2} A. Beauville, Some surfaces with maximal Picard number, Journal de l'Ecole Polytechnique Tome 1 (2014), 101---116,

\bibitem{Bl2} S. Bloch, Some elementary theorems about algebraic cycles on abelian varieties, Invent. Math. 37 (1976), 215---228,

\bibitem{B} S. Bloch, Lectures on algebraic cycles, Duke Univ. Press Durham 1980,


\bibitem{BS} S. Bloch and V. Srinivas, Remarks on correspondences and algebraic cycles, American Journal of Mathematics Vol. 105, No 5 (1983), 1235---1253,

\bibitem{Bonf} M. Bonfanti, On the cohomology of regular surfaces isogenous to a product of curves with $\chi({\mathcal O}_S)=2$, arXiv:1512.03168v1,

\bibitem{Br} M. Brion, Log homogeneous varieties, in: Actas del XVI Coloquio Latinoamericano de Algebra, 
Revista Matem\'atica Iberoamericana, Madrid 2007,
arXiv: math/0609669,


\bibitem{Cat} F. Catanese, Surfaces with $K^2=p_g=1$ and their period mapping, in:  Algebraic geometry (Copenhagen, 1978), Springer Lecture Notes in Mathematics, Springer 1979,

\bibitem{CD} Clingher and C. Doran, Note on a geometric isogeny of $K3$ surfaces, Int. Math. Research Notices 2011 (2011), 3657---3687,

\bibitem{CM} 
M. de Cataldo and L. Migliorini, The Chow groups and the motive of the Hilbert scheme of points on a
surface, Journal of Algebra 251 no. 2 (2002), 824---848,

\bibitem{Del} C. Delorme, Espaces projectifs anisotropes, Bull. Soc. Math. France 103 (1975), 203---223,



\bibitem{Dol} I. Dolgachev, Weighted projective varieties, in: Group actions and vector fields, Vancouver 1981, Springer Lecture Notes in Mathematics 956, Springer Berlin Heidelberg New York 1982,

\bibitem{F} W. Fulton, Intersection theory, Springer--Verlag Ergebnisse der Mathematik, Berlin Heidelberg New York Tokyo 1984,

\bibitem{GaP} A. Garbagnati and M. Penegini, $K3$ surfaces with a non--symplectic automorphism and product--quotient surfaces with cyclic groups, to appear in Rev. Mat. Iberoam.,

\bibitem{GS} A. Garbagnati and A. Sarti, Kummer surfaces and $K3$ surfaces with $(\ZZ/2\ZZ)^4$ symplectic action, arXiv:1305.3514,

\bibitem{vG} B. van Geemen, Kuga--Satake varieties and the Hodge conjecture, in:
The Arithmetic and Geometry of Algebraic Cycles, Banff 1998 (B. Gordon et alii, eds.), Kluwer Dordrecht 2000,

\bibitem{vG2} B. van Geemen, Half twists of Hodge structures of CM--type, J. Math. Soc. Japan Vol. 53 No. 4 (2001), 813---833,

\bibitem{vGS} B. van Geemen and A. Sarti, Nikulin involutions on $K3$ surfaces, Math. Z. 255 (2007), 731---753,


\bibitem{GP} V. Guletski\u{\i} and C. Pedrini, The Chow motive of the Godeaux surface, in:
Algebraic Geometry, a volume in memory of Paolo Francia (M.C. Beltrametti,
F. Catanese, C. Ciliberto, A. Lanteri and C. Pedrini, editors),
Walter de Gruyter, Berlin New York, 2002,

\bibitem{I} F. Ivorra, Finite dimensional motives and applications (following S.-I. Kimura, P. O'Sullivan and others), in:
        Autour des motifs, Asian-French summer school on algebraic geometry and number theory,
       Volume III, Panoramas et synth\`eses, Soci\'et\'e math\'ematique de France 2011,
          
\bibitem{Iy} J. Iyer, Murre's conjectures and explicit Chow--K\"unneth projectors for varieties with a nef tangent bundle, Transactions of the Amer. Math. Soc. 361 (2008), 1667---1681,

\bibitem{Iy2} J. Iyer, Absolute Chow--K\"unneth decomposition for rational homogeneous bundles and for log homogeneous varieties, Michigan Math. Journal
 Vol.60, 1 (2011), 79---91,
 

\bibitem{J1} U. Jannsen, 
Motives, numerical equivalence, and semi-simplicity, Invent. Math. 107(3) (1992), 447---452, 

\bibitem{J2} U. Jannsen, Motivic sheaves and filtrations on Chow groups, in: Motives (U. Jannsen et alii, eds.), Proceedings of Symposia in Pure Mathematics Vol. 55 (1994), Part 1,  


\bibitem{J4} U. Jannsen, On finite--dimensional motives and Murre's conjecture, in: Algebraic cycles and motives (J. Nagel and C. Peters, eds.), Cambridge University Press, Cambridge 2007,

\bibitem{KMP} B. Kahn, J. P. Murre and C. Pedrini, On the transcendental part of the motive of a surface, in: Algebraic cycles and motives (J. Nagel and C. Peters, eds.), Cambridge University Press, Cambridge 2007,

\bibitem{Kim} S. Kimura, Chow groups are finite dimensional, in some sense,
Math. Ann. 331 (2005), 173---201,



\bibitem{Ko} K. Koike, Elliptic $K3$ surfaces admitting a Shioda--Inose structure, Comment. Math. Univ. St. Pauli 61 No 1 (2012), 77---86,

\bibitem{Kun} K. K\"unnemann, A Lefschetz decomposition for Chow motives of abelian schemes, Inv. Math. 113 (1993), 85---102,





\bibitem{Li} C. Liedtke, Supersingular $K3$ surfaces are unirational, Invent. Math. 200 (2015), 979---1014,



\bibitem{Mo} D. Morrison, On $K3$ surfaces with large Picard number, Invent. Math. 75 No 1 (1984), 105---121,




\bibitem{Mur} J. Murre, On a conjectural filtration on the Chow groups of an algebraic variety, parts I and II, Indag. Math. 4 (1993), 177---201,

\bibitem{MNP} J. Murre, J. Nagel and C. Peters, Lectures on the theory of pure motives, Amer. Math. Soc. University Lecture Series 61, Providence 2013,




\bibitem{Par} K. Paranjape, Abelian varieties associated to certain K3 surfaces, Comp. Math. 68 (1988), 11---22,

\bibitem{P} C. Pedrini, On the finite dimensionality of a $K3$ surface, Manuscripta Mathematica 138 (2012), 59---72,


\bibitem{PW} C. Pedrini and C. Weibel, Some surfaces of general type for which Bloch's conjecture holds, to appear in: Period Domains, Algebraic Cycles, and Arithmetic, Cambridge Univ. Press, 2015,



\bibitem{Rito} C. Rito, A note on Todorov surfaces, Osaka Journal of Math. 46(3) (2009), 685---693,

\bibitem{R} A.A. Rojtman, The torsion of the group of 0--cycles modulo rational equivalence, Annals of Mathematics 111 (1980), 553---569,


\bibitem{Shi} T. Shioda, The Hodge conjecture for Fermat varieties, Math. Ann. 245 (1979), 175---184,


\bibitem{Tod} A. Todorov, Surfaces of general type with $p_g= 1$ and $(K,K) = 1$, Ann. Sci. de l'Ecole Normale Sup. 13 (1980), 1---21,


\bibitem{V} C. Vial, Algebraic cycles and fibrations, Documenta Math. 18 (2013), 1521---1553,

\bibitem{V2} C. Vial, Projectors on the intermediate algebraic Jacobians, New York J. Math. 19 (2013), 793---822,

\bibitem{V3} C. Vial, Remarks on motives of abelian type, to appear in Tohoku Math. J.,

\bibitem{V4} C. Vial, Niveau and coniveau filtrations on cohomology groups and Chow groups, Proceedings of the LMS 106(2) (2013), 410---444,

\bibitem{V5} C. Vial, Chow--K\"unneth decomposition for $3$-- and $4$--folds fibred by varieties with trivial Chow group of zero--cycles, J. Alg. Geom. 24 (2015), 51---80,



\bibitem{V9} C. Voisin, Remarks on zero--cycles of self--products of varieties, in: Moduli of vector bundles, Proceedings of the Taniguchi Congress  (M. Maruyama,  ed.), Marcel Dekker New York Basel Hong Kong 1994,


\bibitem{V12} C. Voisin, Sur les z\'ero--cycles de certaines hypersurfaces munies d'un automorphisme, Annali della Scuola Norm. Sup. di Pisa Vol. 29 (1993), 473---492,

\bibitem{V11} C. Voisin, Symplectic involutions of $K3$ surfaces act trivially on $CH_0$, Documenta Math. 17 (2012), 851---860,

\bibitem{V0} C. Voisin, The generalized Hodge and Bloch conjectures are equivalent for general complete intersections, Ann. Sci. Ecole Norm. Sup. 46, fascicule 3 (2013), 449---475,

\bibitem{V1} C. Voisin, The generalized Hodge and Bloch conjectures are equivalent for general complete intersections, II, J. Math. Sci. Univ. Tokyo  22 (2015), 491---517,

\bibitem{V8} C. Voisin, Bloch's conjecture for Catanese and Barlow surfaces, J. Differential Geometry 97 (2014), 149---175,

\bibitem{Vo} C. Voisin, Chow Rings, Decomposition of the Diagonal, and the Topology of Families, Princeton University Press, Princeton and Oxford, 2014,

\bibitem{Xu} Z. Xu, Algebraic cycles on a generalized Kummer variety, arXiv:1506.04297v1.

\end{thebibliography}
\end{document}